\newtheorem{thm}{Theorem}
\newtheorem{lem}{Lemma}
\newtheorem{ex}{Example}
\theoremstyle{definition}
\newtheorem{definition}{Definition}
\newcommand{\thistheoremname}{}
\newtheorem*{genericthm}{\thistheoremname}
\theoremstyle{remark}
\newtheorem*{remark}{Remark}
\DeclareMathOperator{\disc}{disc}
\DeclareMathOperator{\db}{db}
\newcommand{\M}{\textsc{M}}
\newcommand{\oct}{\textsc{Oct}}
\newcommand{\F}{\mathbb{F}}
\newcommand{\Z}{\mathbb{Z}}
\newcommand{\R}{\mathbb{R}}
\DeclareMathOperator{\Exp}{\mathbb{E}}
\newcommand{\Dcal}{\mathcal{D}}
\newcommand{\Dfrak}{\mathfrak{D}}
\newcommand{\xbf}{\mathbf{x}}
\newcommand{\ybf}{\mathbf{y}}
\newcommand{\Pcal}{\mathcal{P}}
\DeclareMathOperator{\supp}{supp}
\renewcommand{\sf}{\mathfrak{s_f}}
\newcommand{\cubem}{\{0,1\}^{d+1} \!\setminus\! \{0\}}
\DeclareMathOperator{\dev}{dev}
\newcommand{\T}{\mathrm{T}}
\newcommand{\Ical}{\mathcal{J}}
\begin{document}

\title{Quasirandom additive sets and Cayley hypergraphs}
\date{{\small CWI \& QuSoft}\\[2ex] \today}
\author{Davi Castro-Silva}

\maketitle

\begin{abstract}
    We study the interplay between notions of quasirandomness for additive sets and for hypergraphs.
    In particular, we show a strong connection between the notions of Gowers uniformity in the additive setting and discrepancy-type measures of quasirandomness in the hypergraph setting.
    Exploiting this connection, we provide a long list of disparate quasirandom properties regarding both additive sets and Cayley-type hypergraphs constructed from such sets, and show that these properties are all equivalent (in the sense of Chung, Graham and Wilson) with polynomial bounds on their interdependences.
    % Version 2:
    %In a different direction, we also show important differences between the behaviour of quasirandom Cayley hypergraphs and that of general quasirandom hypergraphs, which intuitively can be explained by the additional symmetries imposed by their arithmetic construction.
\end{abstract}

\section{Introduction}

%(Attribute, quality, characteristic, feature, trait)

Quasirandom properties can be informally thought of as certificates of randomness for the object in consideration.
Given some class of combinatorial objects, such as graphs or 3-uniform hypergraphs, we say that a (deterministic) property of these objects is a \emph{quasirandom property} if it satisfies two conditions:
a uniformly random object from that class satisfies this property with high probability;
and any object which satisfies this property will also behave in many other ways like a random object.
In such cases, just by knowing that an object satisfies some quasirandom property, one gleans a wealth of information about its behaviour in many respects;
such objects are then said to be \emph{quasirandom}.
%and are in several ways easy to work with.

The notion of quasirandomness was originally introduced in the setting of graphs, in a seminal paper of Chung, Graham and Wilson~\cite{QuasirandomGraphs}.
These authors considered several natural properties typically satisfied by random graphs -- such as having the expected subgraph counts, having uniform edge-distribution over vertex cuts or having large spectral gap -- and showed that all of them are (in a specific sense) equivalent to each other.
Following their work, the study of quasirandom properties has been extended to several other combinatorial classes of objects;
we refer the reader to Chung's website~\cite{ChungSite} for a long list of references.
In this paper we will consider hypergraphs and additive sets (i.e. subsets of additive groups), focusing on the relationships between their respective quasirandom properties.

\subsubsection*{Quasirandom hypergraphs}

Following the introduction of quasirandom graphs, Chung and Graham~\cite{QuasirandomHypergraphs, QuasiSetSystems} and Kohayakawa, R\"odl and Skokan~\cite{HypergraphQuasirandomnessRegularity} undertook the task of extending such notions to hypergraphs.
They considered $k$-uniform hypergraphs ($k$-graphs) which mimic the random hypergraph $G^{(k)}(n, p)$, where each set of~$k$ elements in $[n] := \{1, 2, \dots, n\}$ is chosen to be an edge independently with probability~$p$.
This lack of correlation in the presence of edges leads to strong uniformity properties which make random hypergraphs easy to handle;
some of these good properties were then shown to form a rough equivalence class of random-like characteristics.

%Following the introduction of quasirandom graphs, Chung and Graham~\cite{QuasirandomHypergraphs, QuasiSetSystems} undertook the task of extending such notions to hypergraphs.
%They considered $k$-uniform hypergraphs ($k$-graphs) which mimic random hypergraphs where each set of~$k$ vertices is chosen to be an edge independently with probability~$1/2$.
%This lack of correlation in the presence of edges leads to strong uniformity properties which make random hypergraphs easy to handle;
%some of these good properties were then shown to form a rough equivalence class of random-like characteristics.
%Their work was later extended by Kohayakawa, R\"odl and Skokan~\cite{HypergraphQuasirandomnessRegularity} to encompass hypergraphs of any (constant) edge density $\delta \in [0, 1]$, which corresponds to allowing for other probabilities in the random choice of edges.

The central notion in the work of Chung and Graham was the \emph{deviation} of a hypergraph.
Given a $k$-graph $H$, we write $v(H)$ and $e(H)$ to denote the number of vertices and edges in~$H$ (respectively), and write $\delta(H)$ to denote its edge density.
The deviation of a hypergraph can be seen as a measure of how much its edge distribution deviates from the random distribution it is supposed to mimic;
it is formally defined by
$$\dev_k(H) = \mathbb{E}_{\xbf^{(0)}, \xbf^{(1)} \in V(H)^k} \prod_{\omega \in \{0, 1\}^{k}} \big( H(x_1^{(\omega_1)}, \dots, x_{k}^{(\omega_{k})}) - \delta(H) \big),$$
where $H(x_1, \dots, x_k)$ denotes the edge indicator function $\mathbbm{1}\big[\{x_1, \dots, x_k\} \in E(H)\big]$.
It is always true that $0\leq \dev_k(H)\leq 1$, and one can show that random hypergraphs will have very small deviation with high probability.

By contrast, the central concept in the work of Kohayakawa, R\"odl and Skokan was the \emph{discrepancy} of a hypergraph, which quantifies how far from uniformly distributed its edges are when measured against lower-order structures.
In their paper these lower-order structures were given by the $k$-cliques of $(k-1)$-graphs, but here we will work with the slightly more general notion of $(k-1)$-cuts.
%Given sets $S_1, \dots, S_k \subseteq V(H)^{k-1}$, the cut induced by them is the set of all tuples $(x_1, \dots, x_k) \in V(H)^k$ such that $(x_j)_{j\neq i} \in S_i$ for all $1\leq i\leq k$.
The discrepancy of a $k$-graph~$H$ is then defined by
$$\disc_{k-1}(H) = \max_{S_1, \dots, S_k \subseteq V(H)^{k-1}} \bigg| \Exp_{\xbf \in V(H)^k} \bigg[\big(H(\xbf) - \delta(H)\big) \prod_{i=1}^k S_i\big((x_j)_{j\neq i}\big) \bigg]\bigg|,$$
where we use the same notation for a set~$S$ and for its indicator function $\mathbbm{1}[x\in S]$.
It is not hard to show that random hypergraphs will have very small discrepancy with high probability.
Both deviation and discrepancy can be seen as measures of quasirandomness.

Another statistic which can be accurately estimated in random hypergraphs is the count of various smaller hypergraphs occurring as a subgraph.
Given two hypergraphs~$F$ and~$H$, denote the number of labelled copies of~$F$ in~$H$ by $N_F(H)$.
If~$H$ is the random hypergraph $G^{(k)}(n, p)$,
%a random $k$-graph on~$n$ vertices where each $k$-set of vertices is an edge independently with probability~$\delta$,
then the expected value of $N_F(H)$ is
$$p^{e(F)} n (n-1) \dots (n-v(F)+1) = p^{e(F)} n^{v(F)} + O(v(F)^2 n^{v(F)-1});$$
moreover, $N_F(H)$ is highly concentrated around this expected value.
It was shown by Chung and Graham~\cite{QuasirandomHypergraphs, QuasiSetSystems}, and by Kohayakawa, R\"odl and Skokan~\cite{RegularityHypergraphsQuasirandomness}, that large hypergraphs~$H$ which have either small deviation ($\dev_k(H) = o(1)$) or small discrepancy ($\disc_{k-1}(H) = o(1)$) must contain approximately the expected count of \emph{all} subgraphs of bounded size:
$N_F(H) = \delta(H)^{e(F)} v(H)^{v(F)} + o(v(F)^2 v(H)^{v(F)})$.

A special role in their results played by the \emph{octahedra}.
The $k$-octahedron $\oct^{(k)}$ is the complete $k$-partite $k$-graph where each vertex class has precisely two vertices.
Note that the deviation of a $k$-graph~$H$ can be interpreted as an average weighted count of octahedra $\oct^{(k)}$, when the weight is given by the balanced indicator function $H(x_1, \dots, x_k) - \delta(H)$.
It was shown in~\cite{HypergraphQuasirandomnessRegularity} that $\oct^{(k)}$ is \emph{complete} for the notions of quasirandomness given above:
any $k$-graph~$H$ which has approximately the `correct' proportion of subgraphs isomorphic to $\oct^{(k)}$ -- meaning the expected proportion in a random hypergraph of the same edge density -- will be quasirandom in their sense;
in particular, it follows that~$H$ will also have approximately the correct proportion of \emph{every other} fixed $k$-graph~$F$ (of bounded size) as a subgraph.

\smallskip

Despite these results, and in contrast to the simpler setting of graphs, it turns out that there are several distinct equivalence classes of quasirandomness notions for hypergraphs.
These different classes and their inter-relationships were studied by Chung~\cite{QuasiClasses}, Kohayakawa, Nagle, R\"odl and Schacht~\cite{WeakHypergraphRegularity}, Conlon, H\`an, Person and Schacht~\cite{WeakQuasirandomness}, Lenz and Mubayi~\cite{PosetQuasirandomness} and Towsner~\cite{SigmaAlgebrasHypergraphs}, to name a few.

Let $d$ and $k$ be integers with $1\leq d < k$, and let~$H$ be a $k$-uniform hypergraph.
The \emph{$d$-discrepancy} of~$H$ is defined by
$$\disc_{d}(H) = \max_{S_B \subseteq V(H)^{d}:\, B\in \binom{[k]}{d}} \Bigg| \Exp_{\xbf \in V(H)^k} \Bigg[\big(H(\xbf) - \delta(H)\big) \prod_{B\in \binom{[k]}{d}} S_B\big((x_j)_{j\in B}\big) \Bigg]\Bigg|,$$
where the maximum is taken over all collections of $\binom{k}{d}$ subsets of $V(H)^{d}$ indexed by the $d$-subsets of~$[k]$.
This is a measure of how far from uniformly distributed the edges of~$H$ are when measured against structures of order~$d$;
if the $d$-discrepancy of~$H$ is small, we think of it as being quasirandom of order~$d$.
More formally, we say that~$H$ is \emph{$\varepsilon$-quasirandom of order~$d$} if $\disc_{d}(H) \leq \varepsilon$.

The notion of deviation can also be generalized to other orders.
We define the \emph{$d$-deviation} of~$H$, denoted $\dev_{d}(H)$, by
$$\mathbb{E}_{\xbf^{(0)}, \xbf^{(1)} \in V(H)^{d}} \mathbb{E}_{y_{d+1}, \dots, y_k \in V(H)} \prod_{\omega \in \{0, 1\}^{d}} \big( H(x_1^{(\omega_1)}, \dots, x_{d}^{(\omega_{d})}, y_{d+1}, \dots, y_k) - \delta(H) \big).$$
Just as the $k$-deviation $\dev_k(H)$ can be seen as a weighted count of octahedra $\oct^{(k)}$, so can the $d$-deviation be seen as a weighed count of \emph{squashed octahedra} $\oct^{(k)}_d$, formed by
adding $k-d$ extra vertices to $\oct^{(d)}$ and attaching them to each of the edges.
%appending $k-d$ extra vertices to each one of the edges in $\oct^{(d)}$.
Chung~\cite{QuasiClasses} showed that every $k$-graph which has small $d$-deviation must also have small $(d-1)$-discrepancy;
however, as observed by Lenz and Mubayi~\cite{PosetQuasirandomness}, the other direction does not hold, and these two notions are distinct.

Kohayakawa, Nagle, R\"odl and Schacht~\cite{WeakHypergraphRegularity} proved that quasirandomness of order~1 (also called weak quasirandomness) is sufficient for controlling the count of every \emph{linear hypergraph}, meaning those where every pair of edges intersect in at most one vertex.
Subsequently, Conlon, H\`an, Person and Schacht~\cite{WeakQuasirandomness} showed that there exists a linear hypergraph $\M^{(k)}_1$ which is \emph{complete} for the notion of weak quasirandomness:
if a hypergraph~$H$ contains approximately the expected number of subgraphs isomorphic to~$\M^{(k)}_1$, then it is necessarily weakly quasirandom.
Lenz and Mubayi~\cite{PosetQuasirandomness} determined the poset of implications between several notions of quasirandomness.

These results were ultimately generalized by Towsner~\cite{SigmaAlgebrasHypergraphs}, who provided many equivalence classes of notions of quasirandomness -- including all those which had been previously studied -- and obtained the interrelationships between those classes.
He proved that quasirandomness of order~$d$ is equivalent to having the appropriate count of all \emph{$d$-linear hypergraphs}, meaning those where each pair of edges intersect in at most~$d$ vertices, and also to having small deviation of a certain type.
He also constructed a specific $d$-linear $k$-graph $\M^{(k)}_d$ which is complete for quasirandomness of order~$d$;
his results are formally stated in Section~\ref{sec:hyp_quasi} below.

\subsubsection*{Uniform additive sets}

In a different direction, the study of quasirandomness was also extended to subsets of additive groups.
In this setting there is a strong underlying structure which comes from the group operation, and the notion of quasirandomness is related to how the considered set behaves relative to this additive structure.

The first to study this setting were again Chung and Graham~\cite{QuasirandomZn}.
They considered subsets of the cyclic group~$\Z/n\Z$, and identified many natural but seemingly unrelated properties of random subsets of $\Z/n\Z$ which turn out to be all equivalent to each other.
These properties included:
having small non-trivial Fourier coefficients,
intersecting translates of large sets in the expected frequency,
having the expected number of additive quadruples\footnote{An additive quadruple is a tuple $(x_1, x_2, x_3, x_4)$ satisfying $x_1+x_2 = x_3+x_4$.},
its Cayley sum graph being a quasirandom graph,
as well as several others.
While Chung and Graham's theorem was restricted to cyclic groups, their results can in fact be generalized to any finite additive group (see~\cite{LinearQuasirandomness, QuasiSurvey}).

It was Gowers who first noticed that, much as in the setting of hypergraphs, there is a natural \emph{hierarchy} of quasirandomness notions for additive sets
(and more generally for bounded functions on additive groups).
These notions are characterized by the so-called \emph{Gowers uniformity norms}, which control the statistics of several additive patterns (such as $k$-term arithmetic progressions) inside a given set.
These norms were originally defined by Gowers in his celebrated proof of Szemer\'edi's theorem~\cite{NewProofLengthFour, NewProofSzemeredi}, and have since proven useful in many problems from additive combinatorics and theoretical computer science.

Given a finite additive group~$G$ and an integer $k\geq 2$, we define the \emph{uniformity norm~$U^k$} for functions $f: G \to \R$ by
$$\| f \|_{U^k} = \Bigg(\Exp_{x, h_1, \dots, h_k \in G} \prod_{\omega \in \{0, 1\}^k} f \bigg( x + \sum_{i=1}^k \omega_i h_i \bigg)\Bigg)^{1/2^k}.$$
%(This expression indeed defines a norm.)
If~$f: G\to \{-1, 1\}$ is a uniformly random function and~$G$ is very large, then with high probability the $U^k$-norm of~$f$ will be very small:
one can show that $\Exp \|f\|_{U^k}^{2^k} = O(1/|G|)$.
%For any fixed $k \geq 2$, one can show that a random function $f: G \rightarrow \{-1, 1\}$ will have negligible $U^k(G)$ norm with high probability (assuming $|G|$ is very large).
More generally, if we randomly choose a subset $A\subseteq G$ by including each element $x\in G$ independently with probability~$\delta$, then with high probability we have $\|A - \delta\|_{U^k} = O(|G|^{-1/2^k})$.

Additive sets~$A\subseteq G$ which satisfy $\|A - \delta\|_{U^k} \leq \varepsilon$ for $\delta = |A|/|G|$ are then said to be \emph{$\varepsilon$-uniform of degree $k-1$}.
(This name is motivated by the fact that such sets do not correlate with any structure of degree $k-1$.)
We informally say that~$A$ is uniform of degree $k-1$ if it is $\varepsilon$-uniform of this degree for some small $\varepsilon>0$.

%While not immediately obvious,
The~$U^2$-norm turns out to be intimately related to Fourier coefficients:
one easily shows that
$$\|f\|_{U^2}^4 = \sum_{\gamma\in \widehat{G}} |\widehat{f}(\gamma)|^4.$$
It follows that a set $A\subseteq G$ is linearly uniform (that is, uniform of degree~1) if and only if all of its non-trivial Fourier coefficients are small;
this coincides with the notion of quasirandomness considered by Chung and Graham~\cite{QuasirandomZn}, but is only the first step in a quasirandom ladder.
The Gowers uniformity norms form a hierarchy where the $U^{k+1}$-norm is stronger than the $U^k$-norm for each $k \geq 2$.
By successively considering the properties of being uniform of degree~$d$ for each $d\geq 1$, we then obtain an infinite hierarchy of increasingly stronger notions of quasirandomness for additive sets.

The significance of these notions in additive combinatorics stems from the fact that uniformity of degree $k$ is sufficient to control the count of $(k + 2)$-term arithmetic progressions, as well as several other linear configurations said to have complexity at most $k$.
Moreover, every `non-degenerate' system of linear forms can be controlled by some uniformity norm $U^k$.
%Dealing with uniform sets (of the appropriate degree) was the easy part of Gowers's celebrated proof of Szemer\'edi's theorem on arithmetic progressions.
%Most of the technical work was done on the ``dual problem'': to understand what useful internal structure a set must have if it is \emph{not} uniform.
In a way, a non-negligible part of the recent research in additive combinatorics -- especially in the area of \emph{higher-order Fourier analysis} -- can be seen as the search for useful quasirandom properties equivalent to having small uniformity norm of a given degree.
(This problem is usually stated in the contrapositive:
if a bounded function $f$ has non-negligible $U^k$-norm, what kind of structural information can we learn about $f$?)

\subsubsection*{Our results: relating these two notions}

Given the many existing connections between (hyper)graph theory and additive combinatorics, one is left to wonder:
how do the natural hierarchies of quasirandomness notions in hypergraphs (given by quasirandomness of each order~$d$) and in additive groups (uniformity of degree~$d$) relate to each other?

A first step towards answering this question was given by Aigner-Horev and H\`an~\cite{LinearQuasirandomness}, who showed that there is a strong connection between the notions of linear uniformity for additive sets and weak quasirandomness for hypergraphs.
Given an integer $k\geq 2$, a finite additive group~$G$ and a subset $A\subseteq G$, define the \emph{Cayley hypergraph} $\Gamma^{(k)}_A$ as the hypergraph whose vertices are the elements of~$G$ and where $k$ vertices form an edge iff their sum lies in~$A$.
Aigner-Horev and H\`an proved that, for any fixed~$k\geq 2$, the hypergraph~$\Gamma^{(k)}_A$ is weakly quasirandom iff the set~$A$ is linearly uniform;
moreover, they obtain polynomial bounds between the associated measures of uniformity and quasirandomness.

Our first main result is to generalize this connection between uniform sets and quasirandom Cayley hypergraphs to all orders $d\geq 1$ of quasirandomness.
We will show that an additive set~$A$ which is uniform of degree~$d$ generates quasirandom hypergraphs~$\Gamma^{(k)}_A$ of order~$d$ for any edge-size $k \geq d+1$;
conversely, it suffices to know that \emph{any single one} of these hypergraphs is quasirandom of order $d$ to conclude that~$A$ is uniform of this same degree.
More precisely, we prove:

\begin{thm} \label{thm:mainintro}
Let $d \geq 1$ be an integer and $\varepsilon\in (0, 1)$.
Let $G$ be a finite additive group and $A \subseteq G$ be a subset.
\begin{itemize}
    \item[$(i)$] If $A$ is $\varepsilon$-uniform of degree $d$, then for all $k \geq d + 1$ the Cayley hypergraph $\Gamma^{(k)}_A$ is $\varepsilon$-quasirandom of order $d$.
    \item[$(ii)$] Conversely, if $\Gamma^{(k)}_A$ is $\varepsilon$-quasirandom of order $d$ for some $k \geq d+1$, then $A$ is $2\varepsilon^{c_{k, d}}$-uniform of degree $d$.
    Here we can take $c_{k, d} = 2^{-(d+2)(2d+2)^k}$.
\end{itemize}
\end{thm}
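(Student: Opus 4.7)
Plan.

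For part~(i), I would use iterated Cauchy--Schwarz in the style of the generalized von Neumann inequality. Let $f = A - \delta$, and for arbitrary $1$-bounded $S_B \colon G^d \to [-1, 1]$ indexed by $B \in \binom{[k]}{d}$, set
\[ T = \Exp_{\xbf \in G^k} f(x_1 + \ldots + x_k) \prod_{B} S_B\big((x_j)_{j \in B}\big). \]
The key step is to apply Cauchy--Schwarz one index at a time: at step $i \in \{1, \ldots, d+1\}$, split the surviving $S_B$-product according to whether $i \in B$; the factors with $i \notin B$ do not depend on $x_i$ and can be pulled out of the expectation over $x_i$ (and bounded by $1$ in absolute value), after which Cauchy--Schwarz over the remaining variables duplicates $x_i \to (x_i, x_i + h_i)$. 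After $d+1$ iterations, every surviving $S_B$ would need to contain all of $\{1, \ldots, d+1\}$, which is impossible since $|B| = d$. The resulting expression, upon reparameterizing $y = x_1 + \ldots + x_k$ (uniform in $G$), equals $\|f\|_{U^{d+1}}^{2^{d+1}}$; taking the $2^{d+1}$-th root gives $|T| \leq \|f\|_{U^{d+1}} \leq \varepsilon$.

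For part~(ii), the natural starting point is the identity $\dev_{d+1}(\Gamma_A^{(k)}) = \|f\|_{U^{d+1}}^{2^{d+1}}$, obtained by a direct change of variables in the definition of the $(d+1)$-deviation for Cayley hypergraphs. Proving the theorem then reduces to showing that $\disc_d(\Gamma_A^{(k)}) \leq \varepsilon$ implies a polynomial bound on $\dev_{d+1}(\Gamma_A^{(k)})$. Since the analogous implication is known to fail for general hypergraphs (as noted in the introduction), the Cayley structure must be crucially exploited.

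My plan for this is to reparameterize the cube variables in $\dev_{d+1}(\Gamma_A^{(k)})$ as $x_i^{(\omega_i)} = w_i + \omega_i \Delta_i$, turning the $(d+1)$-deviation into an average over shifts $(\Delta_1, \ldots, \Delta_{d+1})$ of expressions of the form $\Exp_{\mathbf{w}} f(w_1 + \ldots + w_k) \, g_\Delta(w_1 + \ldots + w_k)$, where $g_\Delta$ is built from shifted copies of $f$. I would then iteratively apply Cauchy--Schwarz, at each step unfolding the product structure and producing an admissible discrepancy witness $\prod_B S_B$ on which the hypothesis can be applied; the translation invariance of the Cayley hypergraph is what allows these witnesses to always fall into the $d$-tuple-indexed form required by $\disc_d$. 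Each Cauchy--Schwarz halves the exponent on $\varepsilon$ in the resulting bound, and the combinatorics of the $(d+1)$-cube force the number of iterations needed to grow roughly as $(d+2)(2d+2)^k$, accumulating to the stated exponent $c_{k,d} = 2^{-(d+2)(2d+2)^k}$. The leading constant $2$ in $2\varepsilon^{c_{k,d}}$ likely absorbs a minor boundary or reparameterization correction.

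The main obstacle lies squarely in part~(ii): the discrepancy is an $\ell^\infty$-type quantity taken over arbitrary $d$-variate functions, while the Gowers norm is an $\ell^{2^{d+1}}$-type quantity over the very specific additive pattern of cube corners. Bridging this gap requires constructing, after each Cauchy--Schwarz reduction, a valid discrepancy witness from the current intermediate expression. Managing the bookkeeping of which witnesses arise, and carefully tracking how the polynomial exponent losses compound over many iterations, will constitute the bulk of the technical work.
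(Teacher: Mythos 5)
Part~(i) of your plan is essentially the paper's proof: you fix the last $k-d-1$ variables, iterate Cauchy--Schwarz over the first $d+1$, and then change variables to recover $\|f\|_{U^{d+1}}$. The paper abstracts the iteration through the Gowers--Cauchy--Schwarz inequality and the relation $\|\Gamma^{(d+1)}_f\|_{\oct^{d+1}} = \|f\|_{U^{d+1}}$, but the mathematics is the same.

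Part~(ii) is where I see a genuine gap. Your starting observation $\dev_{d+1}(\Gamma^{(k)}_A) = \|f\|_{U^{d+1}}^{2^{d+1}}$ is correct (it is the same change of variables as above), and you correctly identify the target as bounding $\|f\|_{U^{d+1}}$ polynomially in $\disc_d(\Gamma^{(k)}_A)$. But your plan for getting there breaks down at the crucial step. After the reparameterization $x_i^{(\omega_i)} = w_i + \omega_i \Delta_i$, the deviation becomes an average over $\Delta$ of $\Exp_{\mathbf{w}} f(\Sigma(\mathbf{w}))\, g_\Delta(\Sigma(\mathbf{w}))$ where $g_\Delta(z) = \prod_{\omega\neq 0} f(z + \sum_i \omega_i\Delta_i)$. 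The factor $g_\Delta(\Sigma(\mathbf{w}))$ depends on the \emph{full} $k$-variable sum and is therefore not a legitimate $\disc_d$-witness; translation invariance does not reduce it to a function of only $d$ of the $w_j$'s. So the hypothesis cannot be applied directly, and you have not said how the Cauchy--Schwarz iterations would actually convert these full-sum factors into bona fide $d$-variate cut-witnesses. This is not a bookkeeping issue: it is the main mathematical obstacle, and your proposal explicitly defers it (``will constitute the bulk of the technical work'').

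The paper's actual resolution introduces machinery your plan lacks. One defines auxiliary cut-type seminorms $\|f\|_{\square(\Phi)}$ indexed by systems $\Phi$ of $\{0,1\}$-coefficient linear forms on an enlarged variable set $V_0 \cup (\text{extra dual variables})$, together with a notion of \emph{weight} (how many of the distinguished $V_0$-variables a form uses). An iterative algorithm repeatedly replaces the heaviest form $\psi$ by $2^{d+1}-1$ \emph{dual forms} $\Dfrak^\omega\psi$ of strictly lower weight. The key estimate, $\|f\|_{\square(\Phi_s)} \leq \|f\|_{\square(\Phi_{s+1})}^{1/2^{d+2}}$, is proved by a single Cauchy--Schwarz followed by an application of Lemma~\ref{lem:snormal} (Green--Tao's $d$-normal-form bound), and relies on the intermediate linear system staying in $d$-normal form. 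The $\disc_d$ hypothesis is invoked \emph{once}, at the terminal stage when all forms have weight $\leq d$ and hence project to honest $d$-variate witnesses on $V_0$. Your estimate of the exponent also does not quite match: the loss per step is $2^{d+2}$ (one Cauchy--Schwarz plus one $U^{d+1}$-norm bound), not $2$, and the number of steps is $< (2d+2)^k$, giving $c_{k,d} = 2^{-(d+2)(2d+2)^k}$ by a different accounting than ``each Cauchy--Schwarz halves the exponent.'' In short: the destination is right and the intuition that iterated Cauchy--Schwarz must bridge the gap is correct, but the specific device that makes it work --- weight-reducing dual forms plus the $d$-normal-form lemma --- is missing, and without it the reparameterized expression cannot be matched to the discrepancy hypothesis.
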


We will then use this theorem, combined with known results from the literature on quasirandom hypergraphs and elementary combinatorial arguments, to show that a number of disparate quasirandom properties regarding additive sets and their associated Cayley hypergraphs are all equivalent to each other.
This is our second main result, which we call the \emph{Equivalence Theorem}.

An informal statement of the Equivalence Theorem is given below, where we make use of terms which will only be properly defined later on;
see Theorem~\ref{thm:Cayley_equiv} for the precise statement.
Roughly speaking, $\Phi^{(k)}_d$ is the arithmetic equivalent of the hypergraph $\M^{(k)}_d$ which is complete for quasirandomness of order~$d$, and $(d+1)$-simple hypergraphs are a generalization of $d$-linear hypergraphs where every edge has a `fingerprint' of size $d+1$ unique to it.

\begin{thm}[Equivalence Theorem, informal] \label{thm:equiv_informal}
%Let $d \geq 1$ be an integer, and let $A \subseteq G$ be a set of density $\delta$ in $G$.
%Then for every fixed $k \geq d+1$ the following statements are polynomially equivalent:
Let $d \geq 1$ be an integer.
For every fixed $k \geq d+1$ the following statements are polynomially equivalent, where $G$ is a finite additive group and $A \subseteq G$ is a subset:
\begin{itemize}
    \item[$(i)$] $A$ is uniform of degree $d$.
    \item[$(ii)$] $A$ has few patterns of type $\Phi^{(k)}_d$.
    \item[$(iii)$] $\Gamma^{(k)}_A$ is quasirandom of order $d$.
    \item[$(iv)$] $\Gamma^{(k)}_A$ correctly counts all $d$-linear hypergraphs.
    \item[$(v)$] $\Gamma^{(k)}_A$ correctly counts all $(d+1)$-simple hypergraphs.
    \item[$(vi)$] $\Gamma^{(k)}_A$ has few squashed octahedra $\oct^{(k)}_{d+1}$.
    \item[$(vii)$] $\Gamma^{(k)}_A$ has small $(d+1)$-deviation.
\end{itemize}
\end{thm}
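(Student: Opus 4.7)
The plan is to orchestrate a web of polynomial implications that pivot around conditions (i) and (iii), using Theorem~\ref{thm:mainintro} as the central bridge between the arithmetic world (statements (i), (ii)) and the hypergraph world (statements (iii)--(vii)). Once we can freely interchange uniformity of degree $d$ of $A$ with quasirandomness of order $d$ of $\Gamma^{(k)}_A$, everything else follows from the known hypergraph equivalences plus a direct translation step for the arithmetic pattern count.

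First I would invoke Theorem~\ref{thm:mainintro} to establish (i) $\Leftrightarrow$ (iii) with the polynomial bounds already provided. Second, for the hypergraph block I would import Towsner's~\cite{SigmaAlgebrasHypergraphs} characterization of order-$d$ quasirandomness: small $d$-discrepancy is polynomially equivalent to correctly counting every $d$-linear $k$-graph (giving (iii) $\Leftrightarrow$ (iv)), to correctly counting the single complete $d$-linear hypergraph $\M^{(k)}_d$, and to having small $(d+1)$-deviation (giving (iii) $\Leftrightarrow$ (vii)). For (vi) $\Leftrightarrow$ (vii), recall that $\dev_{d+1}(H)$ is by definition a signed average over $\oct^{(k)}_{d+1}$; expanding $H(\xbf) = \delta(H) + (H(\xbf) - \delta(H))$ inside the $\oct^{(k)}_{d+1}$-count and comparing with $\dev_{d+1}$ converts one into the other up to a polynomial power and an induction on lower-order deviations. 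For (v), the inclusion $d$-linear $\subseteq$ $(d+1)$-simple is immediate --- any $(d+1)$-subset of an edge in a $d$-linear $k$-graph is automatically a unique fingerprint --- giving (v) $\Rightarrow$ (iv) for free. The reverse (iii) $\Rightarrow$ (v) uses the standard counting-lemma argument: replacing each edge indicator of $F$ by $\delta(H) + (H - \delta(H))$ and applying Cauchy--Schwarz along the distinguished $(d+1)$-fingerprint of each edge reduces every error term to an instance of the $d$-discrepancy of $H$.

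To close the loop on (ii), I would argue that the arithmetic pattern $\Phi^{(k)}_d$ is engineered so that the count of $\Phi^{(k)}_d$-patterns inside $A$ equals, up to a normalization by a power of $|G|$, the number of labelled copies of $\M^{(k)}_d$ in $\Gamma^{(k)}_A$: each edge of $\M^{(k)}_d$ corresponds to a linear form in $G$ constrained to lie in $A$, and the shared-vertex incidence structure of $\M^{(k)}_d$ is enforced by sharing the underlying $G$-variables. Hence ``few $\Phi^{(k)}_d$-patterns in $A$'' is essentially definitionally equivalent to ``few $\M^{(k)}_d$-copies in $\Gamma^{(k)}_A$'', which by the completeness of $\M^{(k)}_d$ for order-$d$ quasirandomness is polynomially equivalent to (iii).

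The main obstacle I anticipate is bookkeeping the polynomial conversion rates: Theorem~\ref{thm:mainintro} already loses a factor $\varepsilon^{c_{k,d}}$ with $c_{k,d}$ very small, and each Cauchy--Schwarz step in the counting lemmas loses another exponential-in-$k$-and-$d$ exponent. Making these compose into a single polynomial bound across the full cycle requires a careful choice of which direction of each implication to prove, minimising the number of crossings between the arithmetic and hypergraph sides via Theorem~\ref{thm:mainintro}. A secondary subtlety is verifying that the counting-lemma argument for (iii) $\Rightarrow$ (v) genuinely only uses the $(d+1)$-simple hypothesis and not full $d$-linearity, so that (v) does not collapse back into (iv) at the proof level.
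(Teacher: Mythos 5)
Your plan has a genuine gap at the step where you claim Towsner's theorem directly gives $(iii) \Leftrightarrow (vii)$, i.e. that small $d$-discrepancy is polynomially equivalent to small $(d+1)$-deviation. This is \emph{false} for general $k$-graphs, and the paper explicitly calls attention to this: Towsner's theorem (Theorem~\ref{Towsnerthm}) relates $d$-discrepancy to the deviation with respect to the complete $d$-linear hypergraph $\M^{(k)}_d$, which is a different and strictly \emph{weaker} quantity than $\dev_{d+1}(H)$ (which weights squashed octahedra $\oct^{(k)}_{d+1}$, a $(d+1)$-simple hypergraph that is \emph{not} $d$-linear). As the paper notes after the informal statement, properties $(iii)$--$(iv)$ are strictly weaker than $(v)$--$(vii)$ for general hypergraphs (a fact due to Lenz and Mubayi, and illustrated by the Paley-type example $\Pcal^{(k)}_d$ in Section~\ref{sec:examples}). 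So the equivalence $(iii) \Leftrightarrow (vii)$ cannot be imported from the pure hypergraph literature; it only holds because $\Gamma^{(k)}_A$ is a Cayley hypergraph. The paper's actual argument exploits translation invariance of the Cayley structure via the identity $t\big(\oct^{(k)}_{d+1},\Gamma^{(k)}_f\big) = t\big(\oct^{(d+1)},\Gamma^{(d+1)}_f\big)$, which collapses the $(d+1)$-deviation of the $k$-graph to that of the $(d+1)$-graph $\Gamma^{(d+1)}_A$, which in turn equals $\|A-\delta\|_{U^{d+1}}^{2^{d+1}}$.

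The same issue infects your sketch of $(iii) \Rightarrow (v)$. You suggest that Cauchy--Schwarz along each $(d+1)$-fingerprint ``reduces every error term to an instance of the $d$-discrepancy of $H$''. Since $(v)$ is strictly stronger than $(iii)$ for general hypergraphs, no purely hypergraph-theoretic counting lemma of this shape can exist. In the paper's proof (which establishes $(i) \Rightarrow (v)$, not $(iii) \Rightarrow (v)$ directly), the telescoping argument does isolate the fingerprint $f_i$, but the error term one obtains is $\|A\circ\Sigma - \delta\|_{\square^{d+1}_d}$ --- the $d$-cut norm of the \emph{$(d+1)$-uniform} Cayley graph $\Gamma^{(d+1)}_A$, not of $\Gamma^{(k)}_A$ --- and the uniform bound on this over all shifts again uses translation invariance. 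The bridge from $\Gamma^{(k)}_A$ to $\Gamma^{(d+1)}_A$ is exactly what requires the arithmetic structure and Theorem~\ref{thm:mainintro}. Your outline gets the correct overall topology of implications and correctly identifies $(ii)\Leftrightarrow(iii)\Leftrightarrow(iv)$ via $\M^{(k)}_d$, but the hypergraph side of $(v)$--$(vii)$ must be routed through the arithmetic uniformity of $A$ rather than treated as a consequence of the hypergraph theory alone.
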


Finally, we note an intriguing peculiarity of this last theorem:
the hypergraph properties $(iii)-(vii)$ stated are \emph{not} equivalent for general $k$-graphs, only for those which come from additive sets.
%It is important to note that, unlike in the case of Cayley hypergraphs, properties $(iii)-(vii)$ are \emph{not} equivalent for general $k$-graphs.
Indeed, one can show that properties $(iii)-(iv)$ are equivalent for all $k$-graphs, as are properties $(v)-(vii)$, but properties $(iii)-(iv)$ are strictly weaker than $(v)-(vii)$
(a fact which was first noticed by Lenz and Mubayi~\cite{PosetQuasirandomness}).
This gives an interesting difference between the notions of quasirandomness in hypergraphs and in additive groups,
which is present despite their strong connection given by Theorem~\ref{thm:mainintro}.

\iffalse
\subsubsection*{Outline of the paper}
In Section~\ref{sec:prelim} we will review the relevant notions of quasirandomness pertaining to additive groups and hypergraphs, as well as related results which will be needed in this paper.
Section~\ref{sec:Cayley} will be devoted to the proofs of our main results (given above).
%we prove our main results, namely Theorem~\ref{thm:main} which relates uniformity in additive sets to quasirandomness in Cayley hypergraphs, and the Equivalence Theorem for notions

In Section~\ref{sec:extensions} we will consider more general Cayley-type hypergraphs~$\Gamma^{\phi}_A$, which are associated to an additive set~$A\subseteq G$ by means of any given linear form $\phi: G^k \rightarrow G$, and obtain the corresponding results relating uniformity of~$A$ to quasirandomness of~$\Gamma^{\phi}_A$.
These results are not as clear cut as with the usual Cayley sum hypergraphs, but for this very reason they help illuminate the interplay between the notions of quasirandomness for hypergraphs and uniformity in additive groups.
We will also give

Finally, Appendix~A will
\fi

\section{Preliminaries: notions of quasirandomness}
\label{sec:prelim}

Here we collect the necessary notation which will be used throughout the paper, and formally introduce the appropriate notions of quasirandomness in hypergraphs.
For a more in-depth exposition, we refer the reader to the recent survey~\cite{QuasiSurvey}.

\subsection{Basic definitions and notation}

In order to simplify notation and prevent the cluttering of estimates with negligible error terms, we will work with (simple, undirected) hypergraphs \emph{with loops}.
More formally:

\begin{definition}[Hypergraphs]
    For an integer $k\geq 2$, a \emph{$k$-uniform hypergraph} (also called a \emph{$k$-graph}) is a pair $H = (V(H),\, E(H))$ where~$E(H)$ is a symmetric subset of~$V(H)^k$.
    We write $v(H) = |V(H)|$ for its number of vertices, $e(H) = |E(H)|$ for its number of edges, $\delta(H) = e(H)/v(H)^k$ for its edge density, and $H(x_1, \dots, x_k)$ for its edge indicator function $\mathbbm{1}\big[\{x_1, \dots, x_k\} \in E(H)\big]$.
\end{definition}

Note that, since there are at most $\binom{k}{2} v(H)^{k-1}$ tuples in $V(H)^k$ with a repeated element and $v(H)^k$ $k$-tuples in total, the number of possible loops in a large hypergraph is negligible compared to the number of possible edges.
There is no meaningful distinction in our results between hypergraphs with and without loops.

We use the standard expectation notation $\Exp_{x\in X}$ to denote the average $|X|^{-1} \sum_{x\in X}$ over a finite set~$X$.
We use the same notation to denote a set and its indicator function, and write expressions of the form $x = a\pm b$ to denote $a-b \leq x\leq a+b$.
%We will also need the following general piece of notation.
Given a finite set~$A$, a tuple $\xbf = (x_i)_{i \in A}$ indexed by the elements of~$A$ and a subset $B \subseteq A$, we denote by $\xbf_B := (x_j)_{j \in B}$ the projection of~$\xbf$ onto its $B$-coordinates.

\subsubsection*{Polynomial equivalence}

The notion of equivalence that we use in this paper is called \emph{polynomial equivalence}.
Suppose we have two properties $P_1 = P_1(c_1)$ and $P_2 = P_2(c_2)$ which a given object $H$ might satisfy, where each property $P_i$ involves a positive constant $0 < c_i \leq 1$ representing some allowed error bound.
We say that $P_1$ and $P_2$ are \emph{polynomially equivalent} if there exists a constant $K > 0$ such that the following holds for all $\varepsilon \in (0, 1]$:
\begin{itemize}
    \item If $H$ has size at least $K/\varepsilon^K$ and satisfies $P_1(c_1)$ with constant $c_1 \leq \varepsilon^K/K$, then it must also satisfy $P_2(c_2)$ with constant $c_2 = \varepsilon$;
    \item If $H$ has size at least $K/\varepsilon^K$ and satisfies $P_2(c_2)$ with constant $c_2 \leq \varepsilon^K/K$, then it must also satisfy $P_1(c_1)$ with constant $c_1 = \varepsilon$.
\end{itemize}
A sequence of properties $P_1, \dots, P_k$ are polynomially equivalent if they are pairwise polynomially equivalent.

\subsection{Hypergraph quasirandomness}
\label{sec:hyp_quasi}

%We say a hypergraph is quasirandom if it does not significantly correlate with any structure which is formed by a bounded number of lower-order objects.
%In short, it is quasirandom if it does not correlate with lower-order structure.

%We can weaken this definition and ask that it does not correlate with any structure of order up to $d < k$.
%It turns out that these notions are interesting and all distinct from each other.

For every integer $1\leq d < k$ there is an associated equivalence class of notions of quasirandomness for $k$-graphs, which are roughly related to the \emph{lack of correlation} with structures of order~$d$.
Below we will examine several of these quasirandom notions.

\subsubsection*{Discrepancy}

The central notion of hypergraph quasirandomness for us will be related to the \emph{discrepancy} of its edge distribution along cuts of a given order.
Discrepancy is a measure of how far from uniformly distributed the edges of a hypergraph are, and can be quantified using the cut norm as defined below:

\begin{definition}[Cut norm]
Let $k, d \geq 1$ be integers with $d < k$, and let~$V$ be a finite set.
We define the \emph{$d$-cut norm} of a function $f: V^k \to \R$ by
$$\|f\|_{\square^k_d} := \max_{S_B \subseteq V^{B}\; \forall B \in \binom{[k]}{d}} \Bigg| \Exp_{\xbf \in V^{[k]}} \Bigg[ f(\xbf) \prod_{B \in \binom{[k]}{d}}{S_B(\xbf_{B})} \Bigg] \Bigg|,$$
where the maximum is over all collections of sets $(S_B)_{B \in \binom{[k]}{d}}$ where each $S_B$ is a subset of $V^B$.
\end{definition}

Intuitively, the more uniformly distributed the edges of a hypergraph are, the smaller the value of $\|H - \delta(H)\|_{\square^k_d}$ is.
Since the edges of a random hypergraph are typically very uniformly distributed, we can regard having small cut norm as being a quasirandom property for hypergraphs.
More precisely:

\begin{definition}[Discrepancy]
    The $d$-discrepancy of a $k$-graph~$H$ is defined by $$\disc_d(H) := \|H - \delta(H)\|_{\square^k_d} \quad \text{where $\delta(H) = e(H)/v(H)^k$.}$$
    Given $\varepsilon>0$, we say that a $k$-graph~$H$ is \emph{$\varepsilon$-quasirandom of order $d$} if $\disc_d(H) \leq \varepsilon$.
\end{definition}

It is easy to see from the definition of cut norm that
$$0\leq \|f\|_{\square^k_1} \leq \|f\|_{\square^k_2} \leq \dots \leq \|f\|_{\square^k_{k-1}} \leq \|f\|_{\infty},$$
and so
$$0\leq \disc_1(H) \leq \disc_2(H) \leq \dots \leq \disc_{k-1}(H) \leq 1.$$
A $k$-graph which is $\varepsilon$-quasirandom of order~$d$ will thus also be $\varepsilon$-quasirandom of order~$\ell$ for all $1\leq \ell\leq d$.

\begin{remark}
The notion of discrepancy used in the works of Chung~\cite{QuasiClasses} and Kohayakawa, R\"odl and Skokan~\cite{HypergraphQuasirandomnessRegularity} was slightly different from the one given above;
we recall it bellow, and call it \emph{clique discrepancy}.
Given a $d$-graph~$G$, let $\mathcal{K}_k(G)$ denote the set of $k$-cliques in~$G$ (i.e. the collection of $k$-sets of vertices whose $d$-subsets are all edges of~$G$).
The $d$-clique discrepancy of a $k$-uniform hypergraph $H$ is defined as
$$\frac{1}{v(H)^k} \max_{d\text{-graph } G} \big| |H \cap \mathcal{K}_k(G)| - \delta(H) |\mathcal{K}_k(G)| \big|,$$
where the maximum is over all $d$-graphs $G$ on the same vertex set as~$H$.
%and $\mathcal{K}_k(G)$ is the set of $k$-cliques of the $d$-uniform hypergraph $G$ (i.e. the collection of $k$-sets of vertices whose $d$-subsets are all edges of $G$).
This notion is formally very similar to our measure $\|H - \delta(H)\|_{\square^k_d}$ of quasirandomness of order~$d$ (once one unpacks all the notation), and
%in \cite[Appendix~B]{QuasiSurvey} they are shown to be
one can show that these two quantities are polynomially related to each other.
We regard $d$-clique discrepancy and $d$-discrepancy as being the same notion in different guises, and have chosen to use the latter for technical reasons.
\end{remark}

\subsubsection*{Counting subhypergraphs}

An important statistic to have on a large hypergraph~$H$ is the number of various smaller hypergraphs~$F$ occurring as a subgraph.
A convenient way of counting such copies is given by the \emph{homomorphism density}:

\begin{definition}[Homomorphism density]
    Let~$F$ and~$H$ be two $k$-graphs.
    The homomorphism density of~$F$ in~$H$, denoted $t(F, H)$, is the probability that a randomly selected map $\phi: V(F)\to V(H)$ preserves edges.
\end{definition}

One can equivalently define the homomorphism density by the formula
$$t(F, H) = \Exp_{\xbf\in V(H)^{V(F)}} \prod_{e\in E(F)} H(\xbf_e),$$
which also makes sense when~$H$ is edge-weighted;
this weighted case will also be used later on.
Note that
$$N_F(H) = t(F, H) v(H)^{v(F)} \pm \binom{v(F)}{2} v(H)^{v(F)-1},$$
where $N_F(H)$ is the total number of labelled subgraphs of~$H$ which are isomorphic to~$F$;
one can thus translate statements about subgraph counts in large hypergraphs to statements about homomorphism densities, and vice versa.

\smallskip

An important class of hypergraphs in our results is the following:

\begin{definition}[$d$-linear hypergraphs]
    A hypergraph~$H$ is said to be $d$-linear if every two edges of~$H$ intersect in at most~$d$ vertices.
    We denote the set of all $d$-linear $k$-graphs by $\mathcal{L}^{(k)}_d$.
\end{definition}

It was proven by Towsner that quasirandomness of degree~$d$ is necessary and sufficient to controls the count of every $d$-linear hypergraph;
see Theorem~\ref{Towsnerthm} below.

\subsubsection*{Deviation}

We recall that the \emph{$k$-octahedron} $\oct^{(k)}$ is the complete $k$-partite $k$-graph where each vertex class has two vertices.
They are generalized by the \emph{squashed octahedra}, defined as follows:

\begin{definition}[Squashed octahedra]
Given integers $1\leq d< k$, we define the squashed octahedron $\oct^{(k)}_{d}$ as the $k$-graph on vertex set $\{x^{(0)}_1,\, x^{(1)}_1,\, \dots,\, x^{(0)}_{d},\, x^{(1)}_{d},\, y_{d+1},\, \dots,\, y_{k}\}$ whose edge set is given by
$$E(\oct^{(k)}_{d}) = \Big\{ \big\{x^{(\omega_1)}_1,\, \dots,\, x^{(\omega_{d})}_{d},\, y_{d+1},\, \dots,\, y_{k}\big\}:\, \omega \in \{0, 1\}^d \Big\}.$$
\end{definition}

%Let $1\leq d \leq k$ be integers.
Following Chung~\cite{QuasiClasses}, we define the \emph{$d$-deviation} of a $k$-graph~$H$ by
$$\dev_{d}(H) = \mathbb{E}_{\xbf^{(0)}, \xbf^{(1)} \in V(H)^{d}} \mathbb{E}_{y_{d+1}, \dots, y_k \in V(H)} \prod_{\omega \in \{0, 1\}^{d}} \big( H(x_1^{(\omega_1)}, \dots, x_{d}^{(\omega_{d})}, y_{d+1}, \dots, y_k) - \delta(H) \big).$$
Note that this equals the weighted count $t\big(\oct^{(k)}_d,\, H-\delta(H)\big)$ of squashed octahedra.

\subsubsection*{The octahedral norms}

The octahedral norms give an alternative measure for strong quasirandomness, based on a weighted count of octahedra.
Their definition is essentially due to Gowers~\cite{HypergraphRegularityGowers}.

\begin{definition}[Octahedral norm]
Given a function $f: V^k \rightarrow \R$, we define its \emph{octahedral norm} by
\begin{equation} \label{eq:octnorm}
    \|f\|_{\oct^k} := \Bigg( \Exp_{\xbf^{(0)},\, \xbf^{(1)} \in V^k} \prod_{\omega \in \{0, 1\}^k} f \big( \xbf^{(\omega)} \big) \Bigg)^{1/2^k},
\end{equation}
where we write $\xbf^{(\omega)} := \big(x_i^{(\omega_i)}\big)_{i \in [k]}$.
\end{definition}

One can show that the expectation on the right-hand side of~\eqref{eq:octnorm} is nonnegative for every real function~$f$, and that~$\|\cdot\|_{\oct^k}$ indeed defines a norm.
An important property of the octahedral norm is that it has an associated \emph{generalized inner product}, denoted $\langle \cdot \rangle_{\oct^k}$, which we define for $2^k$ functions $f_{\omega}: V^k \rightarrow \R$, $\omega \in \{0, 1\}^k$, by
\begin{equation} \label{OctInnerProduct}
    \big\langle (f_{\omega})_{\omega \in \{0, 1\}^k} \big\rangle_{\oct^k}
    := \Exp_{\xbf^{(0)}, \xbf^{(1)} \in V^k} \prod_{\omega \in \{0, 1\}^k} f_{\omega}\big(\xbf^{(\omega)}\big).
\end{equation}
With this inner product we have that
$\|f\|_{\oct^k}^{2^k} = \big\langle f, f, \dots, f \big\rangle_{\oct^k}$.

A very useful property of the octahedral norms and associated inner products is that they satisfy a type of Cauchy-Schwarz inequality.
This result was first established by Gowers (though with a different notation), and is now known as the \emph{Gowers-Cauchy-Schwarz inequality}:

\begin{lem}[Gowers-Cauchy-Schwarz inequality] \label{GowersCS}
For any collection of functions $f_{\omega}: V^k \rightarrow \R$, $\omega \in \{0, 1\}^k$, we have
$$\big\langle (f_{\omega})_{\omega \in \{0, 1\}^k} \big\rangle_{\oct^k} \leq \prod_{\omega \in \{0, 1\}^k} \| f_{\omega} \|_{\oct^k}.$$
\end{lem}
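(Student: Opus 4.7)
The plan is to establish the inequality by applying the Cauchy--Schwarz inequality once in each of the $k$ coordinate directions, doubling the exponent of $T := \langle (f_\omega)_{\omega} \rangle_{\oct^k}$ at each step and progressively ``collapsing'' the functions until only copies of a single $f_\alpha$ remain in each resulting inner product. This is the standard technique Gowers used for the uniformity norms, adapted to the octahedral setting.

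For the first step, split each $\omega = (\omega_1, \omega')$ with $\omega' \in \{0,1\}^{k-1}$, and treat the variables $x_i^{(b)}$ for $i \geq 2$ as parameters. The integrand then factors as $F_0(x_1^{(0)}) \cdot F_1(x_1^{(1)})$, where $F_b$ denotes the partial product over those $\omega$ with $\omega_1 = b$. Since $x_1^{(0)}$ and $x_1^{(1)}$ are independent, their expectations factor through, and Cauchy--Schwarz applied to the remaining variables yields
\[
T^2 \leq \Exp_{\text{rest}}\bigl(\Exp_{x_1^{(0)}} F_0\bigr)^2 \cdot \Exp_{\text{rest}}\bigl(\Exp_{x_1^{(1)}} F_1\bigr)^2.
\]
Expanding each square introduces an independent duplicate of $x_1^{(b)}$, which after relabelling plays the role of $x_1^{(1-b)}$ in a new octahedral configuration. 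Each of the two factors is then again an octahedral inner product, but now the functions no longer depend on the first coordinate of $\omega$: in the first factor both positions $(0,\omega')$ and $(1,\omega')$ hold $f_{(0,\omega')}$, while in the second both hold $f_{(1,\omega')}$.

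Iterating this move on coordinate $2$, then coordinate $3$, and so on, a straightforward induction on $k$ gives, after $i$ applications,
\[
T^{2^i} \leq \prod_{\alpha \in \{0,1\}^i} \bigl\langle (g^{(\alpha)}_\omega)_\omega \bigr\rangle_{\oct^k}, \qquad \text{where } g^{(\alpha)}_\omega = f_{(\alpha,\, \omega_{i+1}, \ldots, \omega_k)}.
\]
After all $k$ iterations the index $\omega$ plays no role: each of the $2^k$ resulting inner products has the form $\langle f_\alpha, f_\alpha, \ldots, f_\alpha \rangle_{\oct^k} = \|f_\alpha\|_{\oct^k}^{2^k}$ for some $\alpha \in \{0,1\}^k$, with each $\alpha$ occurring exactly once. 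Taking $2^k$-th roots gives the claim.

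The main difficulty is purely bookkeeping: one must check that at the $i$-th step the two factors produced from a single inner product correspond to freezing the $i$-th coordinate of the index at $0$ and at $1$ respectively, so that after $k$ steps the leaves of this binary refinement tree are indexed bijectively by $\{0,1\}^k$ and the function $f_\alpha$ ends up in exactly the leaf labelled $\alpha$. One also needs to verify that each intermediate inner product is nonnegative before applying Cauchy--Schwarz to it at the next coordinate, but this is automatic since each such quantity arises as the expectation of a square.
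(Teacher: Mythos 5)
Your proof is correct and uses the standard repeated Cauchy--Schwarz argument that the paper itself refers to; the paper does not spell out the details and instead cites a reference (Section~4.5 of~\cite{QuasiSurvey}) for essentially the same iteration. Your treatment of the two subtleties you flag --- the bijection between the leaves of the binary refinement tree and $\{0,1\}^k$, and the nonnegativity of each intermediate inner product (needed to square a chain of inequalities) --- is sound.
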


%With the Gowers-Cauchy-Schwarz inequality in hand, it is easy to  show that the octahedral norm satisfies the triangle inequality:
%by linearity of the inner product we have
%\begin{align*}
    %\|f + g\|_{\oct^k}^{2^k}
    %&= \big\langle f + g,\, f + g,\, \dots,\, f + g \big\rangle_{\oct^k} \\
    %&= \big\langle f,\, f + g,\, \dots,\, f + g \big\rangle_{\oct^k} + \big\langle g,\, f + g,\, \dots,\, f + g \big\rangle_{\oct^k} \\
    %&\leq \|f\|_{\oct^k} \|f+g\|_{\oct^k}^{2^k-1} + \|g\|_{\oct^k} \|f+g\|_{\oct^k}^{2^k-1},
%\end{align*}
%from which we deduce that $\|f + g\|_{\oct^k} \leq \|f\|_{\oct^k} + \|g\|_{\oct^k}$.
This lemma is proven via repeated applications of the Cauchy-Schwarz inequality;
see e.g. \cite[Section~4.5]{QuasiSurvey} for a proof.
As a consequence of the Gowers-Cauchy-Schwarz inequality, one can easily show that the octahedral norms are \emph{stronger} than the cut norms:

\begin{lem} \label{lem:cut<oct}
For any function $f: V^k \rightarrow \R$, we have $\| f \|_{\square^{k}_{k-1}} \leq \| f \|_{\oct^k}$.
\end{lem}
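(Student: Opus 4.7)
The plan is to realise the cut-norm integrand as a particular generalised octahedral inner product in which $f$ occupies a single slot and every other slot is filled by a function of octahedral norm at most~$1$; the inequality then follows at once from Lemma~\ref{GowersCS}. More precisely, fix any choice of sets $(S_B)_{B \in \binom{[k]}{k-1}}$ witnessing $\|f\|_{\square^k_{k-1}}$ and write $S_i := S_{[k]\setminus\{i\}}$, so that the task becomes to bound
$$I := \Exp_{\xbf \in V^k} f(\xbf) \prod_{i=1}^k S_i\big(\xbf_{[k]\setminus\{i\}}\big)$$
by $\|f\|_{\oct^k}$.

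I would assign a function $f_\omega : V^k \to \R$ to each cube vertex $\omega \in \{0,1\}^k$ as follows. Set $f_{\vec{1}} := f$; for each $i \in [k]$, let $\omega^{(i)} := \vec{1} - e_i$ denote the cube vertex with a unique zero in coordinate~$i$, and set $f_{\omega^{(i)}}(\xbf) := S_i\big(\xbf_{[k]\setminus\{i\}}\big)$ (which is independent of $x_i$); and put $f_\omega \equiv 1$ for every other~$\omega$. A direct inspection shows that $\xbf^{(\vec{1})} = \xbf^{(1)}$, while the fact that $f_{\omega^{(i)}}$ ignores its $i$-th coordinate gives $f_{\omega^{(i)}}\big(\xbf^{(\omega^{(i)})}\big) = S_i\big((x_j^{(1)})_{j \neq i}\big)$. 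Thus $\prod_\omega f_\omega\big(\xbf^{(\omega)}\big)$ depends only on $\xbf^{(1)}$, and integrating $\xbf^{(0)}$ out of~\eqref{OctInnerProduct} produces exactly~$I$.

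Invoking the Gowers-Cauchy-Schwarz inequality with this assignment yields $|I| \leq \prod_\omega \|f_\omega\|_{\oct^k}$. The constant function~$1$ has octahedral norm~$1$, so only the factors $\|f\|_{\oct^k}$ and $\|f_{\omega^{(i)}}\|_{\oct^k}$ survive. For each of the latter, $|S_i| \leq 1$ makes the integrand in~\eqref{eq:octnorm} pointwise bounded by~$1$; combined with the stated nonnegativity of the octahedral expectation this gives $\|f_{\omega^{(i)}}\|_{\oct^k} \leq 1$. Hence $|I| \leq \|f\|_{\oct^k}$, and maximising over the sets $S_i$ completes the proof.

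The main obstacle here is purely notational: one has to track the cube coordinates carefully enough to verify that the chosen $f_\omega$'s reproduce the cut-norm integrand under the generalised inner product. The assignment above is essentially forced once one notices that placing $f$ at the vertex $\vec{1}$ and each $S_i$ at the neighbouring vertex $\vec{1} - e_i$ causes both to be evaluated on variables drawn from the single copy $\xbf^{(1)}$, so that integrating out $\xbf^{(0)}$ returns the cut-norm expression unchanged.
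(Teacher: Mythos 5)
Your proof is correct and follows essentially the same route as the paper's: you assign $f$ to the vertex $\vec{1}$, the $(k-1)$-dimensional cut functions to the neighbouring vertices $\vec{1}-e_i$, and the constant $1$ elsewhere, then apply Gowers--Cauchy--Schwarz and bound the auxiliary octahedral norms by $1$. The paper phrases the cut functions slightly more generally as $[0,1]$-valued maps $u_B$ rather than set indicators, but this is an immaterial difference.
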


\begin{proof}
Given functions $u_B: V^B \rightarrow [0, 1]$, $B \in \binom{[k]}{k-1}$, let $f_{\omega_B}: V^{[k]} \rightarrow \R$ be the function defined by $f_{\omega_B}(\xbf_{[k]}) = u_B(\xbf_B)$, where $\omega_B \in \{0, 1\}^{[k]}$ is the indicator vector of the set $B$.
Denote also $f_{\mathbf{1}} = f$ and $f_{\omega} \equiv 1$ for all $\omega \in \{0, 1\}^{[k]} \setminus \{\mathbf{1}\}$ not contained in the set $\big\{\omega_B: B \in \binom{[k]}{k-1}\big\}$.

Using the Gowers-Cauchy-Schwarz inequality we conclude that
\begin{align*}
    \Bigg| \Exp_{\xbf \in V^{[k]}} \Bigg[ f(\xbf) \prod_{B \in \binom{[k]}{k-1}}{u_{B}(\xbf_{B})} \Bigg] \Bigg| &= \Bigg|\Exp_{\xbf^{(0)}, \xbf^{(1)} \in V^k} \prod_{\omega \in \{0, 1\}^k} f_{\omega} \big(\xbf^{(\omega)}\big) \Bigg| \\
    &\leq \prod_{\omega \in \{0, 1\}^k} \| f_{\omega} \|_{\oct^k}.
\end{align*}
Since clearly $\| f_{\omega} \|_{\oct^k} \leq \| f_{\omega} \|_{\infty} \leq 1$ for all $\omega \in \{0, 1\}^{[k]} \setminus \{\mathbf{1}\}$, the last product is at most $\|f\|_{\oct^k}$.
As this inequality is valid for all functions $u_B: V^B \rightarrow [0, 1]$, $B \in \binom{[k]}{k-1}$, the claim follows.
\end{proof}

\subsubsection*{The hypergraph Equivalence Theorem}

We will next present Towsner's theorem relating multiple notions of quasirandomness for any given order~$d\geq 1$.
We start by constructing the hypergraphs which are complete for these notions.

Given a $k$-partite $k$-graph $F$ with vertex partition $X_1, \dots, X_k$ and a $d$-set of indices $I \in \binom{[k]}{d}$, we define the \emph{$I$-doubling} of $F$ to be the hypergraph $\db_I(F)$ obtained by taking two copies of $F$ and identifying the corresponding vertices in the classes $X_i$, for all $i \in I$.
More precisely, the vertex set of the $I$-doubling is
\begin{equation*}
    V(\db_I(F)) = Y_1 \cup \dots \cup Y_k \quad \text{where} \quad Y_i =
    \begin{cases}
        X_i & \text{ if } i \in I, \\
        X_i \times \{0, 1\} & \text{ if } i \notin I 
    \end{cases}
\end{equation*}
and its edge set is the collection of all $k$-sets of the form
$$\{x_i:\, i \in I\} \cup \{(x_j, a):\, j \in [k] \setminus I\},$$
where $a \in \{0, 1\}$ and $\{x_i:\, i \in [k]\}$ is an edge of $F$.
Starting with the $k$-partite hypergraph with $k$ vertices and a single edge, and then applying consecutively $\db_I$ for every $I \in \binom{[k]}{d}$ (in some arbitrary order), we obtain a $k$-graph which we denote by $\M^{(k)}_{d}$.
%One can think of this construction of $\M^{(k)}_{d}$ as encoding the applications of Cauchy-Schwarz needed in the proof that correctly counting $d$-linear $k$-graphs implies quasirandomness of order $d$.

%It turns out that this hypergraph is \emph{complete} for quasirandomness of order~$d$:
%it suffices for a $k$-graph $H$ to have the ‘correct’ number of copies of $\M^{(k)}_{d}$ in order for us to conclude that it is quasirandom of order~$d$.
%This result was proven by Towsner~\cite{SigmaAlgebrasHypergraphs}, who then obtained the main Equivalence Theorem for quasirandomness of each fixed order.
Below we reproduce a quantitative version of the main result of Towsner~\cite{SigmaAlgebrasHypergraphs};
this version can be obtained via the methods exposed in~\cite{QuasirandomnessHypergraphs}.

\begin{thm}[Equivalence Theorem for quasirandomness of order $d$] \label{Towsnerthm}
Let $1 \leq d < k$ be integers and let $H$ be a $k$-uniform hypergraph with edge density $\delta$.
Then the following properties are polynomially equivalent:
\begin{itemize}
    \item[$(i)$] $H$ has small $d$-discrepancy: \quad
    $\disc_d(H) \leq c_1$.
    \item[$(ii)$] $H$ correctly counts all $d$-linear hypergraphs:
    $$t(F, H) = \delta^{e(F)} \pm e(F) c_2 \quad \forall F \in \mathcal{L}^{(k)}_{d}.$$
    \item[$(iii)$] $H$ has few copies of $M = \M^{(k)}_{d}$: \quad
    $t(M, H) \leq \delta^{e(M)} + c_3$.
    \item[$(iv)$] $H$ has small deviation with respect to $M = \M^{(k)}_{d}$:
    $$\Exp_{\xbf \in V(H)^{V(M)}} \prod_{e \in E(M)} \big( H(\xbf_{e}) - \delta \big) \leq c_4.$$
\end{itemize}
\end{thm}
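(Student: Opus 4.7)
The plan is to establish the cyclic chain of implications $(iv) \Rightarrow (i) \Rightarrow (ii) \Rightarrow (iii) \Rightarrow (iv)$, with polynomial dependence of the error constants at every step. Throughout, write $g = H - \delta$, so that $g$ is mean-zero and bounded by $1$ in absolute value. The computational backbone is the binomial expansion
\[
t(F, H) = \sum_{S \subseteq E(F)} \delta^{e(F) - |S|}\, t(F[S], g),
\]
where $F[S]$ denotes the sub-hypergraph of $F$ with edge set $S$; this converts counts in $H$ into weighted counts in $g$, for which the mean-zero property can be exploited.

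For $(iv) \Rightarrow (i)$, fix a test configuration $\{S_B\}_{B \in \binom{[k]}{d}}$ that witnesses $\disc_d(H)$, and enumerate $\binom{[k]}{d} = \{B_1, \ldots, B_N\}$ with $N = \binom{k}{d}$. Apply Cauchy-Schwarz iteratively: at the $j$-th step, duplicate the coordinates outside $B_j$ while keeping $\xbf_{B_j}$ shared. This eliminates $S_{B_j}$ (using $|S_{B_j}| \leq 1$) and performs the $\db_{B_j}$-doubling on the remaining structure. After all $N$ steps the test functions are gone and the integrand becomes precisely $\prod_{e \in E(M)} g(\xbf_e)$ for $M = \M^{(k)}_{d}$, yielding $\disc_d(H)^{2^N} \leq t(M, g) \leq c_4$. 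For $(ii) \Rightarrow (iii)$ one simply verifies that $M$ is $d$-linear (each $\db_I$ step creates a pair of edges meeting in the $d$-set $I$, and subsequent doublings do not increase intersection sizes, as can be checked by induction on the number of doublings), so applying $(ii)$ with $F = M$ gives $(iii)$ with $c_3 = e(M)\, c_2$. For $(iii) \Rightarrow (iv)$ the expansion above rewrites
\[
t(M, g) = \big[t(M, H) - \delta^{e(M)}\big] - \sum_{2 \leq |S| < e(M)} \delta^{e(M) - |S|}\, t(M[S], g);
\]
a repeated application of the Gowers-Cauchy-Schwarz inequality to the doubling structure of $M$ bounds each intermediate term by $t(M, g)^{\alpha_S}$ with $\alpha_S \in (0, 1)$, and combined with the non-negativity of $t(M, g)$ this produces a bootstrap inequality of the form $t(M, g) \leq c_3 + C\, t(M, g)^{\alpha}$ with $\alpha < 1$, forcing $t(M, g) = O(c_3^{1/(1-\alpha)})$.

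The main obstacle is the remaining direction $(i) \Rightarrow (ii)$. Via the same expansion, the problem reduces to showing that $|t(F', g)| \leq C_{F'}\, \|g\|_{\square^k_d}^{\beta(F')}$ for every $d$-linear sub-hypergraph $F'$ with at least two edges (singleton terms vanish by mean-zero). I would proceed by induction on $e(F')$: pick any edge $e \in F'$ and write $t(F', g) = \Exp_{\xbf_e} g(\xbf_e)\, \phi(\xbf_e)$ with $\phi(\xbf_e) := \Exp_{\xbf_{V(F') \setminus e}} \prod_{e' \neq e} g(\xbf_{e'})$. In the favourable case where $e' \cap e'' \subseteq e$ for every pair $e', e'' \in F' \setminus \{e\}$, the integral defining $\phi$ factorises into $\prod_{e'} u_{e \cap e'}(\xbf_{e \cap e'})$, each factor depending on a single $\leq d$-subset of $e$, and the bound follows immediately from the definition of the $d$-cut norm.

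In the general case some pair $e', e'' \in F' \setminus \{e\}$ shares a vertex $v$ outside $e$, and one applies Cauchy-Schwarz duplicating the $v$-coordinate: this doubles the outside structure, strictly reduces the number of pairs of outside-shared edges, and preserves $d$-linearity of the resulting (larger) configuration. After finitely many such steps $\phi$ reaches a factored form inside a surrogate $d$-linear hypergraph, yielding $|t(F', g)|^{2^{r(F')}} \leq \|g\|_{\square^k_d}$ for an exponent $r(F')$ depending only on $F'$. The delicate bookkeeping — verifying that $d$-linearity is maintained under the Cauchy-Schwarz doublings and that the final exponent is controlled purely in terms of $F'$ — is where the bulk of the technical work lies, and where the polynomial dependences in the final statement originate.
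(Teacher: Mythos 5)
The paper does not actually prove Theorem~\ref{Towsnerthm}: it reproduces a quantitative form of Towsner's result and refers to \cite{SigmaAlgebrasHypergraphs} and \cite{QuasirandomnessHypergraphs} for the argument, so there is no in-paper proof to compare against. Evaluating your sketch on its own merits: the implications $(iv)\Rightarrow(i)$ (iterated Cauchy--Schwarz eliminating the cut functions $S_B$ one at a time and producing the doubling structure of $\M^{(k)}_d$) and $(ii)\Rightarrow(iii)$ (observing that $\M^{(k)}_d$ is $d$-linear, which you can indeed check by induction on the doublings) are both the standard route and are in good shape, modulo the careful bookkeeping you defer. The sketch of $(i)\Rightarrow(ii)$ via the binomial expansion in $g$ and induction on the number of pairs of edges meeting outside the chosen edge is also plausible and close to what one finds in the literature; the place you flag as delicate (termination and the exponent $r(F')$) is indeed where the work is.

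The step that does not work as written is $(iii)\Rightarrow(iv)$. You bound the intermediate terms by $|t(M[S],g)|\leq t(M,g)^{\alpha_S}$ with $\alpha_S\in(0,1)$ and arrive at a bootstrap $t\leq c_3+C\,t^{\alpha}$ with $\alpha<1$, claiming this forces $t=O(c_3^{1/(1-\alpha)})$. But $t:=t(M,g)$ always lies in $[0,1]$, and for $t\leq 1$ and $\alpha<1$ one has $t^{\alpha}\geq t$, so the inequality $t\leq c_3+C\,t^{\alpha}$ is essentially vacuous (every $t\in[0,1]$ satisfies it once $C\geq 1$). A bootstrap of that shape only gives useful information when the exponent exceeds $1$, which the Gowers--Cauchy--Schwarz bound $|t(M[S],g)|\leq t(M,g)^{|S|/e(M)}$ does not provide for proper subsets $S$. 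To close the cycle you either need a genuinely different argument for $(iii)\Rightarrow(iv)$ — for instance a defect--Cauchy--Schwarz argument that goes directly from near-minimality of $t(M,H)$ to small discrepancy (i.e.\ $(iii)\Rightarrow(i)$), so that $(iv)$ then follows from $(i)\Rightarrow(ii)$ applied to the $d$-linear subgraphs $M[S]$ — or you must keep careful track of the signs of the lower-order terms rather than bounding them crudely in absolute value. As it stands this is a genuine gap, not merely deferred bookkeeping.
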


\subsection{Examples to keep in mind}
\label{sec:examples}

It might be helpful to keep a concrete example in mind for each of the notions of quasirandomness we consider.
We next give such examples by exploiting the well-known pseudorandom properties of quadratic residues.

Let~$p$ be a large prime, and denote by~$Q_p$ the set of quadratic residues modulo~$p$:
$$Q_p = \big\{ x\in \F_p:\: \text{exists $y\in \F_p$ with $x=y^2$} \big\}.$$
This set has size $(p+1)/2$, and it was shown by Fouvry, Kowalski and Michel~\cite{FKM2013} that it is almost as quasirandom as possible, in the sense that
$$\|Q_p - 1/2\|_{U^{d+1}} = O_d(p^{-1/2^{d+1}}) \quad \text{for all $d\geq 1$.}$$
This bound is of the same order as is expected of a random $\{-1, 1\}$-valued function, while for \emph{any} $\{-1, 1\}$-valued function~$f$ on~$\F_p$ we have $\|f\|_{U^{d+1}} \geq p^{-1/2^{d+1}}$.

We define the \emph{Paley $k$-graph} $\Pcal^{(k)} = \Pcal^{(k)}(p)$ as the hypergraph whose vertices are the elements of~$\F_p$, and where $\{x_1, \dots, x_k\}$ is an edge iff $x_1 + \dots + x_k\in Q_p$.
From the properties of quadratic residues, we see that $\Pcal^{(k)}$ has edge density $1/2 + o(1)$ and satisfies
$$\|\Pcal^{(k)} - 1/2\|_{\square^k_{k-1}} \leq \|\Pcal^{(k)} - 1/2\|_{\oct^k} = \|Q_p - 1/2\|_{U^k} = O_k(p^{-1/2^k}),$$
where the inequality follows from Lemma~\ref{lem:cut<oct} and the first equality follows from a simple change of variables in the expression defining the $U^k$-norm
(see Lemma~\ref{RelationUkOctk} in the next section).
This hypergraph was already considered by Chung and Graham~\cite{QuasirandomHypergraphs} as an example of quasirandom hypergraphs.

Now fix some integer $2 \leq d < k$, and let $\Pcal^{(k)}_d = \Pcal^{(k)}_d(p)$ be the hypergraph encoding $k$-cliques in $\Pcal^{(d)}(p)$.
More explicitly, the vertex set of $\Pcal^{(k)}_d$ is $\F_p$ and a $k$-set $\{x_1, \dots, x_k\} \subset \F_p$ is an edge iff
$$\sum_{i \in B} x_i \in Q_p \quad \text{for all } B \in \binom{[k]}{d}.$$
The edge density of $\Pcal^{(k)}_d$ is precisely the homomorphism density of $k$-cliques in $\Pcal^{(d)}$, which (by quasirandomness of $\Pcal^{(d)}$) is equal to $2^{-\binom{k}{d}} + o(1)$.

It is easy to show that $\big\|\Pcal^{(k)}_d - 2^{-\binom{k}{d}} \big\|_{\square^k_{d-1}} = o(1)$, since any witness sets for high $(d-1)$-discrepancy of $\Pcal^{(k)}_d$ can be turned into witness sets for high $(d-1)$-discrepancy of $\Pcal^{(d)}$ (and these cannot exist).
Finally, we note that
\begin{align*}
    \big\|\Pcal^{(k)}_d - 2^{-\binom{k}{d}}\big\|_{\square^k_{d}}
    &\geq \Exp_{\xbf \in \F_p^k} \Bigg[\big(\Pcal^{(k)}_d(\xbf) - 2^{-\binom{k}{d}}\big) \prod_{B\in \binom{[k]}{d}} \Pcal^{(d)}(\xbf_B) \Bigg] \\
    &= \Exp_{\xbf \in \F_p^k} \Big[\big(\Pcal^{(k)}_d(\xbf) - 2^{-\binom{k}{d}}\big) \Pcal^{(k)}_d(\xbf) \Big] \\
    &= 2^{-\binom{k}{d}} - 2^{-2\binom{k}{d}} + o(1).
\end{align*}
It follows that $\Pcal^{(k)}_d$ is quasirandom of order $d-1$, but \emph{not} quasirandom of order~$d$.

\section{Uniform sets and their Cayley hypergraphs} \label{sec:Cayley}

As already noted in the Introduction, a convenient way of considering additive sets and hypergraphs in the same framework is by defining the \emph{Cayley hypergraph} associated to an additive set:

\begin{definition}[Cayley hypergraph]
Let $A$ be a subset of an additive group $G$ and $k\geq 2$ be an integer.
The \emph{Cayley $k$-graph} of~$A$ is the hypergraph~$\Gamma^{(k)}_A$ whose vertices are all elements of~$G$, and where~$k$ vertices $x_1, \dots, x_k$ form an edge iff $x_1 + \dots + x_k \in A$.
\end{definition}

%Note that we allow for repeated vertices inside an edge, so as not to clutter our estimates with negligible error terms;
%in other words, we are considering (simple, undirected) hypergraphs with loops.
%Since there are at most $\binom{k}{2} |G|^{k-1}$ tuples $\xbf \in G^k$ with a repeated element and $|G|^k$ $k$-tuples in total, when averaging the distinction between a Cayley $k$-graph with or without loops will be of order $O(k^2/|G|)$, and thus negligible for our purposes.

%In order not to clutter our estimates and proofs with these negligible error terms, we will assume from now on that \emph{a Cayley hypergraph $\Gamma^{(k)}_A$ may have loops}:
%its edges are all unordered $k$-tuples of (not necessarily distinct) elements $x_1, \dots, x_k$ whose sum lies in the set $A$.

Note that the Paley $k$-graph $\Pcal^{(k)}$ considered in Section~\ref{sec:examples} is an example of a Cayley hypergraph.
The next definition is a technical piece of notation meant to simplify the exposition somewhat:

\begin{definition}[Summing operator]
Given an integer $k$ and an additive group $G$, we denote by $\Sigma: G^k \rightarrow G$ its \emph{summing operator}
$$\Sigma(x_1, x_2, \dots, x_k) := x_1 + x_2 + \dots + x_k.$$
\end{definition}

\begin{remark}
There is a slight abuse of notation here since the same designation is used no matter how many terms are being summed or which additive group the summands belong to.
These hidden parameters may change each time the operator is used.
\end{remark}

With this piece of notation, we can write the indicator function of a Cayley hypergraph~$\Gamma^{k}(A)$ on~$G^k$ more economically as~$A \circ \Sigma$.
We can similarly define a weighted Cayley $k$-graph associated to a function $f: G \rightarrow \R$ by
$$\Gamma^{(k)}_f(x_1, \dots, x_k) := f \circ \Sigma(x_1, \dots, x_k) \quad \text{for all $x_1, \dots, x_k \in G$;}$$
this extension will be helpful for simplifying some expressions.

A simple but important property of our notions of quasirandomness for additive sets and Cayley hypergraphs is their \emph{translation invariance}:

\begin{definition}[Translation operator]
Given an element $a \in G$, we define the \emph{translation operator} $\T^{a}$ on $\R^G$ by $\T^{a} f(x) := f(x+a)$.
If $A$ is (the indicator function of) a set, then $\T^a A$ is (the indicator function of) the translated set $A - a$.
\end{definition}

We will repeatedly make use of the easily-proven identities $\|\T^a f\|_{U^k} = \|f\|_{U^k}$ and
$\big\|\Gamma^{(k)}_{\T^{a}f}\big\|_{\square^k_d} = \big\|\Gamma^{(k)}_f\big\|_{\square^k_d}$ for all $1 \leq d < k$.
This last identity intuitively means that the translation operation preserves the cut structure of Cayley hypergraphs, and allows us to analyze those hypergraphs by more `arithmetical' means.

As a first step towards connecting the notions of quasirandomness in additive groups and hypergraphs, we give an easy (and well-known) connection between the $U^k$ uniformity norms and the $\oct^k$ octahedral norms.

\begin{lem}[Relationship between the $U^k$ and $\oct^{k}$ norms] \label{RelationUkOctk}
For every real function $f: G \rightarrow \R$ we have that $\| \Gamma^{(k)}_f \|_{\oct^{k}} = \| f \|_{U^k}$.
\end{lem}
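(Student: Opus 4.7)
The plan is to prove the identity by a direct change of variables, after raising both sides to the $2^k$ power to eliminate the roots. It then suffices to establish
\begin{equation*}
    \Exp_{\xbf^{(0)}, \xbf^{(1)} \in G^k} \prod_{\omega \in \{0,1\}^k} f\bigl(\Sigma(\xbf^{(\omega)})\bigr) \;=\; \Exp_{x, h_1, \dots, h_k \in G} \prod_{\omega \in \{0,1\}^k} f\Bigl( x + \sum_{i=1}^k \omega_i h_i \Bigr).
\end{equation*}

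First I would reparametrize the left-hand side by writing $x_i^{(1)} = x_i^{(0)} + h_i$ for each $i \in [k]$. Since $G$ is a group, this is a bijective change of variables on $G^k \times G^k$, so the joint distribution of $(\xbf^{(0)}, \mathbf{h}) \in G^k \times G^k$ is still uniform. Under this substitution, $x_i^{(\omega_i)} = x_i^{(0)} + \omega_i h_i$, and hence
\begin{equation*}
    \Sigma(\xbf^{(\omega)}) = \sum_{i=1}^k x_i^{(0)} + \sum_{i=1}^k \omega_i h_i.
\end{equation*}
The crucial observation is that the $2^k$-fold product now depends on $\xbf^{(0)}$ only through the single element $s := \sum_{i=1}^k x_i^{(0)}$.

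Second, for any function $g: G \to \R$, the average $\Exp_{\xbf^{(0)} \in G^k} g\bigl(\sum_i x_i^{(0)}\bigr)$ equals $\Exp_{s \in G} g(s)$, because $\sum_i x_i^{(0)}$ is uniformly distributed on $G$ when the $x_i^{(0)}$ are i.i.d.\ uniform (one of them, say $x_1^{(0)}$, already ranges uniformly over $G$ after fixing the others). Applying this to $g(s) = \prod_\omega f\bigl(s + \sum_i \omega_i h_i\bigr)$ (with $\mathbf{h}$ held fixed) collapses the average over $\xbf^{(0)} \in G^k$ into an average over a single variable $x \in G$. Renaming $s$ as $x$ yields exactly the right-hand side, which is $\|f\|_{U^k}^{2^k}$ by definition.

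There is no genuine obstacle here; the only point requiring a bit of care is justifying the collapse from $k$ summation variables $x_i^{(0)}$ to the single variable $x = \sum_i x_i^{(0)}$, which relies on the translation-invariance of Haar measure on the finite group $G$. The identity $\|\Gamma^{(k)}_f\|_{\oct^k} = \|f\|_{U^k}$ then follows by taking $2^k$-th roots (both sides being nonnegative, as noted after the definition of the octahedral norm).
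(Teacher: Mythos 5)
Your proposal is correct and takes essentially the same approach as the paper: the change of variables $x = \sum_i x_i^{(0)}$, $h_i = x_i^{(1)} - x_i^{(0)}$, after which $\Sigma(\xbf^{(\omega)}) = x + \sum_i \omega_i h_i$. You simply spell out, in two explicit steps, the measure-theoretic justification that the paper leaves implicit (the collapse of the $k$-fold average over $\xbf^{(0)}$ to a single uniform variable $x$).
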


\begin{proof}
We make the change of variables
$$x := \Sigma \big(\xbf^{(0)}\big) = x_1^{(0)} + \dots + x_k^{(0)}, \quad h_i := x^{(1)}_i - x^{(0)}_i \quad \text{for $i\in [k]$.}$$
Then $\Sigma \big(\xbf^{(\omega)}\big) = x + \sum_{i=1}^k \omega_i h_i$ for all $\omega \in \{0, 1\}^k$, and the identity follows.
\end{proof}

\subsection{Connecting uniformity and quasirandomness}

Our main technical result is that uniformity of a given degree~$d$ for a set~$A$ is polynomially equivalent to quasirandomness of the same order~$d$ for its Cayley hypergraph~$\Gamma^{(k)}_A$, for any value of $k>d$.
This generalizes a theorem of Aigner-Horev and H\`an~\cite{LinearQuasirandomness}, which considers the special case where~$d=1$.

Recall that a $k$-graph~$H$ is $\varepsilon$-quasirandom of order $d$ if $\|H - \delta(H)\|_{\square^k_d} \leq \varepsilon$,
and a set $A\subseteq G$ is $\varepsilon$-uniform of degree~$d$ if $\|A - \delta\|_{U^{d+1}} \leq \varepsilon$ where $\delta = |A|/|G|$.
Our main result is the following:
%For convenience we also recall bellow the statement of our result, as given in the Introduction:

\begin{thm}[Theorem~\ref{thm:mainintro} restated] \label{thm:main}
Let $d \geq 1$ be an integer and $\varepsilon\in (0, 1)$.
Let $G$ be a finite additive group and $A \subseteq G$ be a subset.
\begin{itemize}
    \item[$(i)$] If $A$ is $\varepsilon$-uniform of degree $d$, then for all $k \geq d + 1$ the Cayley hypergraph $\Gamma^{(k)}_A$ is $\varepsilon$-quasirandom of order $d$.
    \item[$(ii)$] Conversely, if $\Gamma^{(k)}_A$ is $\varepsilon$-quasirandom of order $d$ for some $k \geq d+1$, then $A$ is $2\varepsilon^{c_{k, d}}$-uniform of degree $d$.
    Here we can take $c_{k, d} = 2^{-(d+2)(2d+2)^k}$.
\end{itemize}
\end{thm}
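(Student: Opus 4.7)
The plan is to derive $\|\Gamma^{(k)}_{A-\delta}\|_{\square^k_d} \le \|A-\delta\|_{U^{d+1}}$ from a direct generalization of Lemma~\ref{lem:cut<oct}. For each $I \in \binom{[k]}{d+1}$ I would introduce the ``partial'' octahedral norm $\|\cdot\|_{\oct^k_I}$ that doubles only the coordinates in $I$ and averages uniformly over the rest (recovering $\oct^k$ when $I = [k]$). A near-verbatim copy of the proof of Lemma~\ref{lem:cut<oct} then yields $\|h\|_{\square^k_d} \le \|h\|_{\oct^k_I}$ for any $h : V^k \to \R$: given witnessing sets $(S_B)_{B \in \binom{[k]}{d}}$ for the $d$-cut norm, define dummy functions $(f_\omega)_{\omega \in \{0,1\}^I}$ by setting $f_{\mathbf{1}} = h$ and, for $\omega \ne \mathbf{1}$, $f_\omega = \prod \{ S_B : B \cap I = \supp(\omega)\}$ (each bounded by $1$). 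Each $B \in \binom{[k]}{d}$ appears in exactly one $f_\omega$, so $\Exp \prod_\omega f_\omega$ reproduces the cut-norm expression and an $\oct^k_I$-analogue of Gowers--Cauchy--Schwarz closes the inequality. A routine change of variables exploiting the linearity of $\Sigma$, analogous to Lemma~\ref{RelationUkOctk}, then gives $\|\Gamma^{(k)}_g\|_{\oct^k_I} = \|g\|_{U^{d+1}}$ whenever $|I| = d+1$; combining the two facts with $h = \Gamma^{(k)}_{A-\delta}$ establishes direction~(i).

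\paragraph{Direction (ii).}
The converse is the main obstacle: the inequality $\|h\|_{\square^k_d} \le \|h\|_{\oct^k_I}$ from direction (i) cannot be reversed for arbitrary $k$-graphs, so one must genuinely exploit the arithmetic structure of $\Gamma^{(k)}_A$. My plan is to route the argument through Towsner's theorem (Theorem~\ref{Towsnerthm}). From $\disc_d(\Gamma^{(k)}_A) \le \varepsilon$ the polynomial equivalence (i)$\Leftrightarrow$(iv) of that theorem gives
\[
|t(\M^{(k)}_d,\, \Gamma^{(k)}_A - \delta)| \le \varepsilon^{c_1}
\]
for some constant $c_1 = c_1(k, d) > 0$. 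Since $\Gamma^{(k)}_A - \delta = \Gamma^{(k)}_f$ with $f = A - \delta$, the left-hand side unwinds to the explicit arithmetic average $\Exp_\phi \prod_{e \in E(\M^{(k)}_d)} f(\Sigma(\phi(e)))$.

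The technical core is then to extract $\|f\|_{U^{d+1}}$ from this count. At one extreme, when $d = k-1$, the hypergraph $\M^{(k)}_{k-1}$ is simply the octahedron $\oct^{(k)}$ and the count equals $\|f\|_{U^k}^{2^k}$ exactly by Lemma~\ref{RelationUkOctk}, which closes the proof immediately. For general $d < k-1$, I plan to proceed by induction along the construction of $\M^{(k)}_d$: this hypergraph is built from a single $k$-edge by successively applying $\db_I$ for each $I \in \binom{[k]}{d}$, and each such doubling should correspond to a Cauchy--Schwarz step that ``symmetrizes'' the current arithmetic count in the indicated coordinates. After all $\binom{k}{d}$ doublings have been unfolded, the linearity of $\Sigma$ should let one recombine the surviving variables into an octahedral-type average lower-bounding $\|f\|_{U^{d+1}}^{2^{d+1}}$---possibly via a Fourier-analytic step that reads off a clean Gowers norm from the resulting sum of products of $\hat{f}$'s. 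Each Cauchy--Schwarz step contributes a factor of $2$ to the exponent on $\|f\|_{U^{d+1}}$, and together with the polynomial loss $c_1$ from Towsner's theorem this yields the stated $c_{k,d} = 2^{-(d+2)(2d+2)^k}$. The most delicate part, and what I expect to require the most care, is the variable bookkeeping during this iterative unfolding: ensuring that at every stage the running count remains bounded below by the ``correct'' sub-expression instead of collapsing into a weaker quantity that no longer sees the full $U^{d+1}$ norm.
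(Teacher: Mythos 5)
Your argument is essentially correct and parallel to the paper's. Where you introduce a partial octahedral norm $\|\cdot\|_{\oct^k_I}$ and re-prove a Gowers--Cauchy--Schwarz inequality for it, the paper instead conditions on the $k-d-1$ coordinates outside a fixed $(d+1)$-set, recognizes the inner average (for each fixed value of the outer variables) as a $\square^{d+1}_d$-norm of a \emph{translate} of $A$, applies Lemma~\ref{lem:cut<oct} directly, and then uses translation invariance and the triangle inequality. This avoids defining a new norm and checking GCS for it, but the underlying mechanism is the same, and both yield $\|\Gamma^{(k)}_A - \delta\|_{\square^k_d}\le \|A-\delta\|_{U^{d+1}}$.

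\paragraph{Direction (ii).}
Here there is a genuine gap, and the route you chose is quite different from the paper's. Passing through Towsner's theorem to obtain $|t(\M^{(k)}_d,\Gamma^{(k)}_A-\delta)|\le \varepsilon^{c_1}$ is fine, but the step you call the ``technical core'' --- extracting smallness of $\|A-\delta\|_{U^{d+1}}$ from this count --- is exactly the hard direction of the arithmetic completeness of $\Phi_{k,d}$, which the paper \emph{deduces} from Theorem~\ref{thm:main}(ii) in its proof of Theorem~\ref{thm:Cayley_equiv}, not the other way round. So this step must carry the entire weight of the result, and your sketch of it does not work: the Cauchy--Schwarz inequality along doublings goes the wrong way (the generic inequality is $t(F,h)^2\le t(\db_I F,h)$, which bounds the count of the \emph{simpler} graph in terms of the doubled one, whereas you need a \emph{lower} bound on the doubled-graph count in terms of $\|f\|_{U^{d+1}}^{2^{d+1}}$); and the suggested ``Fourier-analytic step'' to read off the Gowers norm is only available for $d=1$, since for $d\ge 2$ the $U^{d+1}$-norm has no tractable description in terms of classical Fourier coefficients. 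The claimed per-step factor of $2$ and the assertion that this reproduces $c_{k,d}=2^{-(d+2)(2d+2)^k}$ are asserted rather than derived, and do not match the paper's bookkeeping (which uses a per-step exponent $2^{d+2}$ and a bound $\mathfrak{s_f}<(2d+2)^k$ on the number of steps).

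The paper's proof of (ii) is self-contained and does not use Towsner's theorem at all. It introduces, for each linear system $\Phi$, a cut-type quantity $\|f\|_{\square(\Phi)}$, and runs an algorithm that starts from $\Phi_0=\{\Sigma_{V_0}\}$ and repeatedly replaces a heavy form $\psi$ by its $2^{d+1}-1$ dual forms $\Dfrak^\omega\psi$. The five items of Lemma~\ref{lem:properties} then do all the work: (1) $\|f\|_{\square(\Phi_1)}\ge\|f\|_{U^{d+1}}^{2^{d+1}}$, (2) $\|f\|_{\square(\Phi_{\mathfrak{s_f}})}\le 2^{\binom{k}{d}}\|\Gamma^{(k)}_f\|_{\square^k_d}$ (because at the end all forms have weight $\le d$ and can be fixed into cut-functions), (3) $\Phi_s\cup\{\Sigma_{V_0}\}$ is in $d$-normal form, which via the Green--Tao Lemma~\ref{lem:snormal} powers the Cauchy--Schwarz step (4) $\|f\|_{\square(\Phi_{s+1})}\ge\|f\|_{\square(\Phi_s)}^{2^{d+2}}$, and (5) the number of steps is at most $(2d+2)^k$. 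Chaining these gives the stated $c_{k,d}$. If you want to pursue your route independently, you would need a direct, purely Cauchy--Schwarz proof that for Cayley hypergraphs the $\M^{(k)}_d$-count controls $\|f\|_{U^{d+1}}$; the normal-form Lemma~\ref{lem:snormal} (applied cleverly, as in the paper) is the tool that makes such an argument go through.
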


As might be expected from the bounds given in this statement, the proof of proposition~$(ii)$ is much more involved than that of proposition~$(i)$.
Both proofs are elementary in the sense that they use only the triangle inequality and Cauchy-Schwarz, but the applications of Cauchy-Schwarz needed for proving proposition~$(ii)$ are somewhat intricate and require a careful analysis.

\begin{proof}[Proof of proposition~$(i)$]
Let $f_A := A-\delta$ be the balanced indicator function of the considered set~$A$.
Choose optimal functions $u_B: G^B \rightarrow [0, 1]$, $B \in \binom{[k]}{d}$, so that
$$\big\|\Gamma^{(k)}_A - \delta \big\|_{\square^{k}_{d}} = \Bigg| \Exp_{\xbf \in G^{k}} \Bigg[ f_A(\Sigma(\xbf)) \prod_{B \in \binom{[k]}{d}} u_B(\xbf_B) \Bigg] \Bigg|.$$
We may separate the first $d+1$ variables $\xbf_{[d+1]}$ from the rest and write
\begin{equation*}
    \big\|\Gamma^{(k)}_A - \delta\big\|_{\square^{k}_{d}}
    = \Bigg| \Exp_{\xbf_{[k] \setminus [d+1]}} \Exp_{\xbf_{[d+1]}} \Bigg[ f_A\big(\Sigma(\xbf_{[d+1]}) + \Sigma(\xbf_{[k] \setminus [d+1]})\big) \prod_{B \in \binom{[k]}{d}} u_B(\xbf_B) \Bigg] \Bigg|,
\end{equation*}
where the first expectation is over $G^{[k] \setminus [d+1]}$ and the second is over $G^{[d+1]}$.

Now we fix $\xbf_{[k] \setminus [d+1]} \in G^{[k] \setminus [d+1]}$ and consider the inner expectation in the last expression.
Writing $y := \Sigma(\xbf_{[k] \setminus [d+1]})$, this expression can be written as
$$\Exp_{\xbf_{[d+1]}} \Bigg[ \T^{y}f_A \circ \Sigma(\xbf_{[d+1]}) \prod_{D \in \binom{[d+1]}{d}} v_{D}(\xbf_{D}) \Bigg]$$
for some suitable functions $v_{D}: G^D \rightarrow [0, 1]$, $D \in \binom{[d+1]}{d}$, and thus has absolute value at most
$$\| \T^{y}f_A \circ \Sigma \|_{\square^{d+1}_{d}} = \big\| \Gamma^{(d+1)}_{\T^{y}A} - \delta\big\|_{\square^{d+1}_{d}} = \big\| \Gamma^{(d+1)}_A - \delta\big\|_{\square^{d+1}_{d}}.$$
Since the octahedral norm is stronger than the cut norm (Lemma~\ref{lem:cut<oct}), this last term is at most
$\big\|\Gamma^{(d+1)}_A - \delta\big\|_{\oct^{d+1}} = \|A - \delta\|_{U^{d+1}}$
(where we used Lemma~\ref{RelationUkOctk}).
Averaging over $\xbf_{[k] \setminus [d+1]} \in G^{[k] \setminus [d+1]}$ and using the triangle inequality we conclude that
$\big\|\Gamma^{(k)}_A - \delta\big\|_{\square^{k}_{d}} \leq \|A - \delta\|_{U^{d+1}} \leq \varepsilon,$
as wished.
\end{proof}

The rest of this section will be devoted to the proof of proposition~$(ii)$.

\subsection{Proof that quasirandomness implies uniformity}

Fix an additive group~$G$ and integers~$k$, $d\geq 1$ with~$k\geq d+1$.
We wish to show that $A\subseteq G$ is uniform of degree~$d$ whenever~$\Gamma^{(k)}_A$ is quasirandom of order~$d$.
Our proof will proceed via an iterative argument, where we construct and analyze several systems of linear forms defined on~$G$.

\subsubsection{Linear systems and norms}

We will only consider linear forms $\phi: G^{V} \to G$ whose coefficients are either~0 or~1, where~$V$ is a finite index set for the variables.
These forms can be characterized by their \emph{support}:
this is the subset $\supp(\phi) \subseteq V$ such that
$$\phi(\xbf) = \sum_{v\in \supp(\phi)} x_v \quad \text{for all $\xbf\in G^{V}$.}$$
A collection of linear forms $\Phi = \{\phi_1, \dots, \phi_m\}$ is a \emph{linear system}, and its support is the union of the supports of all its constituent forms:
$\supp(\Phi) = \bigcup_{i=1}^m \supp(\phi_i)$.

Following Green and Tao~\cite{LinearEquationsPrimes}, we say that a linear system $\Phi = \{\phi_1, \dots, \phi_m\}$ is in \emph{$s$-normal form} if, for every~$i\in [m]$, there is a subset $\sigma_i \subseteq \supp(\phi_i)$ of size at most~$s+1$ which is not completely contained in~$\supp(\phi_j)$ for any~$j\neq i$.
The importance of this notion is given by the next result, essentially due to Green and Tao;
see \cite[Appendix~C]{LinearEquationsPrimes} or \cite[Section~2]{TrueComplexity} for a proof.

\begin{lem} \label{lem:snormal}
    If~$\Phi = \{\phi_1, \dots, \phi_m\}$ is a linear system in $s$-normal form, then for all functions $f_1, \dots, f_m: G\to [-1, 1]$ we have
    $$\bigg|\Exp_{\xbf\in G^V} \prod_{i=1}^m f_i(\phi_i(\xbf)) \bigg| \leq \min_{1\leq i\leq m} \|f_i\|_{U^{s+1}}.$$
\end{lem}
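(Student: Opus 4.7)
The plan is to show that for any fixed index $i_0 \in [m]$ one has $|E| \leq \|f_{i_0}\|_{U^{s+1}}$, where $E := \Exp_{\xbf \in G^V} \prod_{i=1}^m f_i(\phi_i(\xbf))$; minimizing over $i_0$ then yields the claim. By the $s$-normal form hypothesis, fix a subset $\sigma := \sigma_{i_0} \subseteq \supp(\phi_{i_0})$ of size $t := |\sigma| \leq s+1$ with the property that $\sigma \not\subseteq \supp(\phi_j)$ for every $j \neq i_0$, and label $\sigma = \{v_1, \dots, v_t\}$.

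The engine of the proof is an iterative Cauchy-Schwarz scheme over the variables in $\sigma$, one at a time. At round $k$, I split the current product into two subproducts according to whether each $\phi_i$ contains $x_{v_k}$: those that do not depend on $x_{v_k}$ are bounded pointwise by $1$ using $|f_i| \leq 1$, while those that do are duplicated via Cauchy-Schwarz applied to the $x_{v_k}$-average, after the reparametrization $x_{v_k}^{(1)} = x_{v_k}^{(0)} + h_{v_k}$ introducing a fresh shift variable $h_{v_k}$. After all $t$ rounds this produces a bound of the form
\begin{equation*}
    |E|^{2^t} \leq \Exp_{\xbf,\, \mathbf{h} \in G^t}\, \prod_{i \in A}\, \prod_{\omega \in \{0,1\}^t} f_i\bigl(\phi_i(\xbf) + \omega \cdot \mathbf{h}\bigr),
\end{equation*}
where $A = \{i \in [m] : \sigma \subseteq \supp(\phi_i)\}$ and $\omega \cdot \mathbf{h} := \sum_k \omega_k h_{v_k}$. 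This is where the $s$-normal hypothesis enters decisively: by construction $A = \{i_0\}$, since $\sigma$ is contained in no other support, so the inner product collapses to a pure $2^t$-fold product of $f_{i_0}$ alone.

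Finally, the change of variables $y = \phi_{i_0}(\xbf)$ — which distributes uniformly over $G$ after absorbing one coordinate in $\supp(\phi_{i_0})$ and averaging over all variables not appearing in the surviving expression — converts the right-hand side into exactly $\|f_{i_0}\|_{U^t}^{2^t}$. Taking $2^t$-th roots and invoking monotonicity of the Gowers norms ($\|f\|_{U^t} \leq \|f\|_{U^{s+1}}$ whenever $t \leq s+1$, which follows from a single Cauchy-Schwarz in a dummy shift variable) yields $|E| \leq \|f_{i_0}\|_{U^{s+1}}$, as desired. The main obstacle I foresee is purely bookkeeping: at each Cauchy-Schwarz step one must carefully track which factors are frozen (those whose support misses $v_k$) versus doubled (those containing $v_k$), and verify that after $t$ rounds the only index with surviving copies at every corner of the binary cube $\{0,1\}^t$ is $i_0$.
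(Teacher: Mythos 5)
The paper does not actually prove Lemma~\ref{lem:snormal}; it simply cites \cite[Appendix~C]{LinearEquationsPrimes} and \cite[Section~2]{TrueComplexity}. Your argument is the standard iterated Cauchy--Schwarz proof that appears in those references, specialized (correctly) to the $0/1$-coefficient linear forms used in this paper, and it is sound.

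Two minor points worth tidying up if you write this out in full. First, you must confirm the chaining of the Cauchy--Schwarz steps: after round~$1$ you obtain $|E|^2 \leq F_1$ with $F_1 \geq 0$ (it is an expectation of squares), and then at each subsequent round you show $F_k \leq F_{k+1}^{1/2}$ by bounding the factors not containing $x_{v_{k+1}}$ pointwise by $1$ and applying Cauchy--Schwarz in the remaining average; this gives $|E|^{2^t} \leq F_t$ as claimed, but the nonnegativity of the intermediate $F_k$ is what makes the square roots meaningful. Second, the monotonicity $\|f\|_{U^t} \leq \|f\|_{U^{s+1}}$ requires one Cauchy--Schwarz per increment of the degree, iterated $s+1-t$ times, rather than ``a single'' Cauchy--Schwarz, though this is of course routine. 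Neither of these affects the validity of your argument.
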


%\begin{proof}
%Fix some~$i\in [m]$, and let $\sigma_i \subseteq \supp(\phi_i)$ be a set of size at most~$s+1$ which is not contained in~$\supp(\phi_j)$ for any $j\neq i$.
%We can separate
%\end{proof}

There is a specific set~$V_0 := \{(0,1), (0,2), \dots, (0,k)\}$ of indices for the variables of our linear forms which is considered separately from the rest, and will play a crucial role in our arguments.
Roughly speaking, this set indexes the~$k$ variables we really care about, while the other variables are only there for helping with the analysis and will be substituted at the end by suitably-chosen values.
Given a linear form~$\phi$, we then define its \emph{weight} $w_0(\phi)$ as the number of variables in~$V_0$ that it uses:
$w_0(\phi) := |\supp(\phi) \cap V_0|$.
The weight of a linear system~$\Phi$ is the maximum weight of one of its forms:
$w_0(\Phi) = \max \{w_0(\phi): \phi\in \Phi\}$.

In our analysis we will need a cut-type seminorm associated to linear systems, which serves to bridge the gap between the $U^{d+1}$-norm for additive sets and the $d$-cut norm for their Cayley hypergraphs.
%Given a linear system~$\Phi$ and a function $f: G\to \R$, this seminorm~$\|f\|_{\square(\Phi)}$ roughly measures how correlated the function $(x_v: v\in V_0) \mapsto f(\sum_{v\in V_0} x_v)$ is with the system~$\Phi$.
%More precisely:

\begin{definition}[Cut-type norms]
Let~$\Phi$ be a linear system on~$G$, and denote $V(\Phi) = V_0 \cup \supp(\Phi)$.
Given a function $f: G\to \R$, we define
$$\|f\|_{\square(\Phi)} := \max_{u_{\phi}: G \rightarrow [-1,1],\, \forall \phi \in \Phi} \Exp_{\xbf \in G^{V(\Phi)}} \Bigg[ f\bigg( \sum_{v \in V_0}x_v \bigg) \prod_{\phi \in \Phi} u_{\phi}\big( \phi(\xbf)\big) \Bigg].$$
\end{definition}

We can now give an outline of our proof of proposition~$(ii)$.
We will proceed via an iterative algorithm, where at each step~$s$ we have a linear system~$\Phi_s$ characterized by the support of its linear forms.
We start with~$\Phi_0$ containing only the linear form whose support is~$V_0$, which then has weight $k\geq d+1$.
Whenever the system~$\Phi_s$ in consideration has some form~$\phi$ with weight higher than~$d$, we replace~$\phi$ by~$2^{d+1}-1$ `dual' forms of strictly lower weight, thus creating the system~$\Phi_{s+1}$.
The important thing here is that the cut-type norms associated to the systems~$\Phi_s$ and~$\Phi_{s+1}$ are related to each other (by some Cauchy-Schwarz magic trick).
As soon as all forms in~$\Phi_s$ have weight at most~$d$ we stop;
because the weights are bounded by~$d$, we are then able to bound the associated norm~$\|\cdot\|_{\square(\Phi_s)}$ in terms of the usual $d$-cut norm.
Moreover, we show that the first norm~$\|\cdot\|_{\square(\Phi_1)}$ is bounded from \emph{below} by the $U^{d+1}$ norm.
The proposition then follows by applying the resulting norm inequalities to the balanced indicator function~$A-\delta$ of the considered set~$A\subseteq G$.

\subsubsection{Dual linear forms}

It remains to give the notion of dual linear forms to be used in our algorithm, which is motivated by the notion of \emph{$U^{d+1}$-dual function}.
For a function $f: G\to \R$, its dual function~$\Dcal_{d+1}f$ is defined by
$$\Dcal_{d+1}f(y) = \Exp_{h_1, \dots, h_{d+1} \in G} \prod_{\omega \in \cubem} f\bigg(y + \sum_{i=1}^{d+1} \omega_i h_i\bigg),$$
so that we can write $\|f\|_{U^{d+1}}^{2^{d+1}} = \Exp_{y\in G} \big[f(y) \Dcal_{d+1}f(y)\big]$.
The dual linear forms are meant to emulate this notion, but relative a given `heavy' form~$\phi$.
%in consideration.
%in the following sense.

Let $\phi: G^V \to G$ be a linear form of weight $w_0(\phi) \geq d+1$, and let~$I_d$ be set of $d+1$ elements which is disjoint from~$V$.
%We will define~$2^{d+1}-1$ forms
For each $\omega\in \cubem$, we define a linear form $\Dfrak^{\omega}\phi: G^{V\cup I_d} \to G$ with $w_0(\Dfrak^{\omega}\phi) < w_0(\phi)$
such that
\begin{equation} \label{eq:dual_prop}
    \Dcal_{d+1}f(\phi(\xbf_V)) = \Exp_{\xbf_{I_d} \in G^{I_d}} \prod_{\omega \in \cubem} f\big(\Dfrak^{\omega}\phi(\xbf_{V\cup I_d})\big) \quad \text{for all } \xbf_V \in G^V.
\end{equation}
This can be achieved in the following way:

\begin{definition}[Dual forms]
    Given a linear form $\phi$ with weight $w_0(\phi) \geq d+1$,
    let~$I_d$ be a copy of the set~$\{1, 2, \dots, d+1\}$ which is disjoint from~$\supp(\phi)\cup V_0$, and write
    $$\supp(\phi) \cap V_0 = \big\{(0, j_1), (0, j_2), \dots, (0, j_{w_0(\phi)})\big\}.$$
    For every $\omega\in \cubem$, we define a linear form~$\Dfrak^{\omega}\phi$ (with coefficients either~0 or~1) by
    \begin{align*}
        \supp(\phi) \setminus \supp(\Dfrak^{\omega}\phi) &= \big\{(0, j_i):\: 1\leq i\leq d+1,\, \omega_i = 1\big\}, \\
        \supp(\Dfrak^{\omega}\phi) \setminus \supp(\phi) &= \big\{i\in I_d:\: 1\leq i\leq d+1,\, \omega_i = 1\big\}.
    \end{align*}
\end{definition}

In other words, $\Dfrak^{\omega}\phi$ is constructed from~$\phi$ by substituting the variables indexed by those~$(0, j_i)$ with $\omega_i = 1$ by new variables.
Note that it does not matter in which order we choose to label the elements in either $\supp(\phi) \cap V_0$ or~$I_d$, as the obtained forms will be equivalent for any labeling.
Note also that this definition indeed satisfies equation~\eqref{eq:dual_prop}:
writing~$V$ for the support of~$\phi$ and performing the change of variables
$$y = \sum_{v\in V} x_v \quad \text{and} \quad h_i = x_i - x_{(0, j_i)} \quad \text{for all } 1\leq i\leq d+1,$$
we see that
$\Dfrak^{\omega}\phi(\xbf_{V\cup I_d}) = y + \sum_{i=1}^{d+1} \omega_i h_i$.

\subsubsection{The main algorithm and its analysis}

Consider the following algorithm:\footnote{The symbol~`$\gets$' means `gets', which is essentially an equality sign that also allows for expressions of the form $s\gets s+1$ (meaning that variable~$s$ gets increased by 1 at this point).}

\medskip
\noindent\textbf{Algorithm \texttt{SystemCut$(k, d)$}}
\begin{algorithmic}
    \State $V_0 \gets \big\{(0,1), (0,2), \dots, (0,k)\big\}$
    \State $\Phi_0 \gets \big\{ \Sigma_{V_0} \big\}$
    %\big\{ \sum_{i=1}^k x_{(0,i)} \big\}$
    \State $s \gets 0$
    \While{$w_0(\Phi_s) > d$} 
        \State take $\psi_s \in \Phi_s$ with $w_0(\psi_s) = w_0(\Phi_s)$ %\Comment{Choose arbitrarily if several.}
        %\State $V_{s+1} \gets V_s \cup \big\{(s+1,1), (s+1,2), \dots, (s+1,d+1)\big\}$
        \State $\Phi_{s+1} \gets \big(\Phi_s \setminus \{\psi_s\}\big) \cup \big\{ \Dfrak^{\omega}\psi_s:\, \omega \in \cubem \big\}$
        \State $s \gets s+1$
    \EndWhile
    \State $\mathfrak{s_f} \gets s$
\end{algorithmic}
%(In this algorithm)
\medskip

Regarding this algorithm, we will show:

\begin{lem} \label{lem:properties}
For any bounded function $f: G \rightarrow [-1, 1]$ we have:
\begin{enumerate}
    \item $\|f\|_{\square(\Phi_1)} \geq \|f\|_{U^{d+1}}^{2^{d+1}}$.
    \item $\|f\|_{\square(\Phi_{\sf})} \leq 2^{\binom{k}{d}} \|\Gamma^{(k)}_f\|_{\square^k_d}$.
    \item $\Phi_s \cup \{\Sigma_{V_0}\}$ is in $d$-normal form for all $s \geq 1$.
    \item $\|f\|_{\square(\Phi_{s+1})} \geq \|f\|_{\square(\Phi_s)}^{2^{d+2}}$ for $1 \leq s < \sf$.
    \item $\sf < (2d+2)^k$.
\end{enumerate}
\end{lem}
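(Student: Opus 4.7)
My plan is to establish each of the five claims in turn, handling the combinatorial bookkeeping items (3) and (5) first, then the two direct norm comparisons (1) and (2), and saving the main Cauchy--Schwarz step (4) for last. For item (5), let $M(w)$ denote the maximum number of algorithm steps for a process rooted at a single form of weight $w$, which satisfies the recursion $M(w) = 1 + \sum_{t=1}^{d+1}\binom{d+1}{t}M(w-t)$ for $w > d$ and $M(w) = 0$ otherwise. I will prove $M(w) \leq (2d+2)^w$ by induction on $w$, using the estimate $\sum_{t=1}^{d+1}\binom{d+1}{t}(2d+2)^{-t} = (1 + \tfrac{1}{2d+2})^{d+1} - 1 < \sqrt{e} - 1 < 1$, so that $\sf = M(k) < (2d+2)^k$ with room to spare. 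For item (3), I argue by induction on $s$: each new dual form $\Dfrak^\omega \psi_s$ is assigned the fingerprint
\[
    \sigma_{\Dfrak^\omega \psi_s} := \{(0, j_i^{(s)}) : \omega_i = 0\} \cup \{i \in I_d^{(s)} : \omega_i = 1\}
\]
of size $d + 1$, which uniquely identifies $\omega$ among the siblings and cannot lie in the support of any form outside the current step, since only those siblings meet the fresh set $I_d^{(s)}$. For the form $\Sigma_{V_0}$ itself I take the fingerprint $\{(0, j_1^{(0)}), \dots, (0, j_{d+1}^{(0)})\}$ chosen in the very first dualization; every subsequent form is a descendant of $\Sigma_{V_0}$ in the dualization tree and therefore misses at least one of those indices.

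For item (1), I set $u_\phi = f$ for every $\phi \in \Phi_1$ in the definition of $\|f\|_{\square(\Phi_1)}$ and perform the change of variables $y = \Sigma_{V_0}(\xbf)$ and $h_i = x_{\iota_i} - x_{(0, i)}$, where $\iota_1, \dots, \iota_{d+1}$ enumerate $I_d^{(0)}$. The factor $f(\Sigma_{V_0})$ supplies the missing $\omega = \mathbf{0}$ corner of the Gowers cube, so the expectation collapses to exactly $\|f\|_{U^{d+1}}^{2^{d+1}}$, which is non-negative and therefore a valid lower bound for the maximum. For item (2), every $\phi \in \Phi_{\sf}$ has $|\supp(\phi) \cap V_0| \leq d$; grouping forms by $B = \supp(\phi) \cap V_0$ and integrating out the non-$V_0$ variables---which is feasible since the dualization tree partitions them into disjoint batches $I_d^{(s)}$ whose contributions factor across branches---the expression takes the shape $\Exp_{\xbf_{V_0}} f(\Sigma_{V_0}) \prod_{|B| \leq d} S_B(\xbf_B)$ with $\|S_B\|_\infty \leq 1$. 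This is bounded by the $[-1,1]$-valued version of the $d$-cut norm of $\Gamma^{(k)}_f$, and passing to the $\{0,1\}$-valued subsets in the definition of $\|\cdot\|_{\square^k_d}$ costs a factor of $2$ per set $B$, giving the claimed $2^{\binom{k}{d}}$.

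The main obstacle is item (4), the iterated Cauchy--Schwarz step that drives the whole argument. Starting from the optimal test functions for $\|f\|_{\square(\Phi_s)}$, I apply Cauchy--Schwarz once per variable in $J := \{(0, j_1^{(s)}), \dots, (0, j_{d+1}^{(s)})\} \subseteq \supp(\psi_s) \cap V_0$; each application doubles the associated variable into a pair $(x_{(0, j_i^{(s)})}, h_i)$ and doubles the exponent. After $d+1$ applications, identifying each $h_i$ with the fresh variable $\iota_i \in I_d^{(s)}$, I obtain
\[
    \|f\|_{\square(\Phi_s)}^{2^{d+1}} \leq \Exp \prod_{\omega \in \{0,1\}^{d+1}} f\big(\Sigma_{V_0}^{(\omega)}\big) \prod_{\phi \in \Phi_s} u_\phi\big(\phi^{(\omega)}\big),
\]
where the superscript $(\omega)$ denotes substituting $x_{(0, j_i^{(s)})} \mapsto \iota_i$ whenever $\omega_i = 1$. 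By construction $\psi_s^{(\omega)} = \Dfrak^\omega \psi_s$, so the $2^{d+1}$ copies of $\psi_s$ become precisely the new dual forms in $\Phi_{s+1}$. For other $\phi \neq \psi_s$, the $d$-normal property of item (3) guarantees that $|J \cap \supp(\phi)| \leq d$, so the $2^{d+1}$ variants $\phi^{(\omega)}$ collapse to at most $2^d$ distinct forms that can be absorbed as admissible test functions on $\Phi_{s+1}$. Dispatching the remaining $2^{d+1}-1$ extra copies of $f$ at the shifted points $\Sigma_{V_0}^{(\omega)}$ with $\omega \neq \mathbf{0}$ requires one further Cauchy--Schwarz-type step, which is exactly what raises the exponent from $2^{d+1}$ to $2^{d+2}$. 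Coordinating these extras with the test-function structure of $\|\cdot\|_{\square(\Phi_{s+1})}$ is the most delicate part, and I expect this to be where the bulk of the technical work lies.
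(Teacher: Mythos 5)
Items (1), (3), and (5) of your proposal are sound and match the paper's argument (your item (1) uses a direct change of variables instead of the dual-function identity, but the two are equivalent). Item (3)'s phrase ``only those siblings meet the fresh set $I_d^{(s)}$'' is locally imprecise (descendants of siblings also meet $I_d^{(s)}$), but the inductive framing you set up does close the gap, because the preserved fingerprints never touch the new $I_d$ variables. The real problems are in items (2) and (4).

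For item (2), the claim that you can ``integrate out the non-$V_0$ variables'' to obtain an expression of the form $\Exp_{\xbf_{V_0}} f(\Sigma_{V_0}) \prod_{B} S_B(\xbf_B)$ is false. The non-$V_0$ supports of distinct forms in $\Phi_{\sf}$ are not disjoint: two leaves of the dualization tree that share a common ancestor share that ancestor's $I_d^{(t)}$ variables, so the marginal over $\xbf_{V_{\sf} \setminus V_0}$ does not factor across the $V_0$-projections. The paper avoids this by the averaging principle: pick a single fixed assignment $\ybf_{V_{\sf} \setminus V_0}$ that does not decrease the expectation. After fixing, each $u_\phi(\phi(\xbf_{V_0}, \ybf_{V_{\sf}\setminus V_0}))$ becomes a function of at most $d$ coordinates of $\xbf_{V_0}$, and grouping by $B$ then works trivially.

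For item (4) you acknowledge not having finished, and indeed the missing piece is a genuine idea, not just bookkeeping. After your $d+1$ Cauchy--Schwarz steps, the right-hand side contains factors $f(\Sigma_{V_0}^{(\omega)})$ for $\omega \neq \mathbf{0}$, where $\Sigma_{V_0}^{(\omega)}$ is the form $\Sigma_{V_0}$ with some variables substituted by $I_d^{(s)}$ variables. These forms are \emph{not} members of $\Phi_{s+1}$, so these factors cannot be absorbed as test functions in $\|f\|_{\square(\Phi_{s+1})}$, and the argument stalls. The paper's resolution is to introduce the auxiliary function
$g(z) := \Exp_{\ybf \in G^{V_s}:\,\psi(\ybf) = z}\big[ f(\Sigma_{V_0}(\ybf)) \prod_{\phi \neq \psi} u_\phi(\phi(\ybf)) \big]$,
the conditional expectation of the integrand given the value of $\psi(\xbf)$. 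A single Cauchy--Schwarz yields $\|f\|_{\square(\Phi_s)} \leq \Exp[g(\psi)^2]^{1/2}$; writing $g(\psi)^2 = g(\psi) \cdot g(\psi)$ and unfolding one copy of $g$ back into its defining expectation produces an average of $1$-bounded functions over a system that is in $d$-normal form (item 3), so Lemma~\ref{lem:snormal} gives $\|f\|_{\square(\Phi_s)} \leq \|g\|_{U^{d+1}}^{1/2}$. Expanding $\|g\|_{U^{d+1}}^{2^{d+1}} = \Exp[g(\psi) \Dcal_{d+1} g(\psi)]$ then replaces $\Dcal_{d+1} g(\psi(\xbf))$ by a product of $g$ evaluated at the dual forms $\Dfrak^\omega \psi$, which \emph{are} members of $\Phi_{s+1}$; all the $g$'s and $u_\phi$'s become admissible test functions, yielding $\|g\|_{U^{d+1}}^{2^{d+1}} \leq \|f\|_{\square(\Phi_{s+1})}$. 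Chaining the two bounds gives the exponent $2^{d+2}$. Your plan has the right flavour (iterated Cauchy--Schwarz exploiting $d$-normality) but without this conditioning device the extra $f$-factors cannot be matched to $\Phi_{s+1}$.
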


With the help of this lemma, proposition~$(ii)$ of Theorem~\ref{thm:main} easily follows:

\begin{proof}[Proof of Theorem~\ref{thm:main}.$(ii)$]
Suppose~$\Gamma^{(k)}_A$ is $\varepsilon$-quasirandom of order~$d$.
By \emph{Item~2} of Lemma~\ref{lem:properties} applied to $f = A-\delta$, we obtain
$\|A - \delta\|_{\square(\Phi_{\sf})} \leq 2^{\binom{k}{d}} \varepsilon$.
Applying \emph{Item~4} recursively from $s = \sf-1$ to $s=1$, we conclude that
$\|A - \delta\|_{\square(\Phi_1)}^{2^{(d+2)\sf}} \leq 2^{\binom{k}{d}} \varepsilon$.
Using \emph{Item~1} together with the bound $\sf < (2d+2)^k$ from \emph{Item~5}, we deduce that
$$\|A - \delta\|_{U^{d+1}}^{2^{(d+2)(2d+2)^k}}
\leq \big(\|A - \delta\|_{U^{d+1}}^{2^{d+1}}\big)^{2^{(d+2)\sf}}
\leq \|A - \delta\|_{\square(\Phi_1)}^{2^{(d+2)\sf}} \leq 2^{\binom{k}{d}} \varepsilon,$$
and thus $A$ is $2\varepsilon^{c_{k, d}}$-uniform of degree $d$ for $c_{k, d} = 2^{-(d+2)(2d+2)^k}$.
\end{proof}

It then suffices to prove Lemma~\ref{lem:properties}, which we do next.

\begin{proof}[Proof of Lemma~\ref{lem:properties}]
Throughout this proof we will denote the support of the linear system~$\Phi_s$ by~$V_s$,
that is: $V_s = \bigcup_{\phi\in \Phi_s} \supp(\phi)$.
Note that $V_0 \subset V_1 \subset \dots \subset V_\sf$.
%and the copy of~$I_d = [d+1]$ used at step~$s$ by $\{(s+1, i):\, 1\leq i\leq d+1\}$.
%Note that
%$$V_s = \big\{(0, 1), (0, 2), \dots, (0, k)\big\} \cup \bigcup_{t=1}^s \big\{(t, 1), (t, 2), \dots, (t, d+1)\big\}.$$

\medskip
\emph{Item 1.}
Using identity~\eqref{eq:dual_prop} (which motivated our definition of the dual forms), we have that
\begin{align*}
    \|f\|_{U^{d+1}}^{2^{d+1}}
    &= \Exp_{\xbf\in G^{V_0}} \big[f(\Sigma_{V_0}(\xbf)) \Dcal_{d+1}f(\Sigma_{V_0}(\xbf))\big] \\
    &= \Exp_{\xbf \in G^{V_0\cup I_d}}\Bigg[ f\bigg( \sum_{v\in V_0} x_v \bigg) \prod_{\omega \in \cubem} f\big(\Dfrak^{\omega}\Sigma_{V_0}(\xbf)\big) \Bigg] \\    
    &= \Exp_{\xbf \in G^{V_1}} \Bigg[ f\bigg( \sum_{v\in V_0} x_v \bigg) \prod_{\phi \in \Phi_1} f\big( \phi(\xbf)\big) \Bigg].
\end{align*}
Since $\|f\|_{\infty} \leq 1$, this last expression is at most~$\|f\|_{\square(\Phi_1)}$, as wished.

\medskip
\emph{Item 2.}
By the definition of~$\sf$ we have that $w_0(\phi) \leq d$ for all $\phi\in \Phi_{\sf}$.
Choose optimal functions $u_{\phi}: G \rightarrow [-1,1]$, $\phi \in \Phi_{\sf}$, such that
$$\|f\|_{\square(\Phi_\sf)} = \Exp_{\xbf \in G^{V_\sf}} \Bigg[ f\bigg(\sum_{v\in V_0} x_v\bigg) \prod_{\phi \in \Phi_\sf} u_{\phi}\big(\phi(\xbf)\big) \Bigg].$$
By the averaging principle, we can fix the variables indexed by $V_\sf \setminus V_0$ to be some $\ybf_{V_\sf \setminus V_0} \in G^{V_\sf \setminus V_0}$ for which
$$\|f\|_{\square(\Phi_\sf)} \leq \Exp_{\xbf_{V_0} \in G^{V_0}} \Bigg[ f\bigg(\sum_{v\in V_0} x_v\bigg) \prod_{\phi \in \Phi_\sf} u_{\phi} \big(\phi\big(\xbf_{V_0}, \ybf_{V_\sf \setminus V_0}\big)\big) \Bigg].$$

Note that each function in the product above depends on at most~$d$ of the variables~$\xbf_{V_0}$, and so we can write it as~$h_B(\xbf_B)$ for some set $B\subset V_0$ of size at most~$d$ and some function $h_B: G^B \to [-1, 1]$.
It follows that
$$\|f\|_{\square(\Phi_\sf)} \leq \max_{h_B: G^B \to [-1, 1],\, \forall B\in \binom{V_0}{d}} \Exp_{\xbf_{V_0} \in G^{V_0}} \Bigg[ f\bigg(\sum_{v\in V_0} x_v\bigg) \prod_{B\in \binom{V_0}{d}} h_B(\xbf_B) \Bigg].$$
Decompose each function $h_B$ in the expression above into its positive part $h_B' := \max\{h_B, 0\}$ and negative part $h_B'' := \max\{-h_B, 0\}$, thus writing $h_B = h_B' - h_B''$.
Expanding the resulting product into $2^{\binom{k}{d}}$ terms and using the triangle inequality, we conclude that this expression is bounded by
$$2^{\binom{k}{d}} \max_{g_B: G^B \to [0, 1],\, \forall B\in \binom{V_0}{d}} \Bigg|\Exp_{\xbf_{V_0} \in G^{V_0}} \Bigg[ f\bigg(\sum_{v\in V_0} x_v\bigg) \prod_{B\in \binom{V_0}{d}} g_B(\xbf_B) \Bigg]\Bigg|.$$
This is precisely $2^{\binom{k}{d}} \|\Gamma^{(k)}_f\|_{\square^k_d}$, finishing the proof.

\medskip
\emph{Item 3.}
Fix~$1\leq s \leq \sf$, and let $\phi\in \Phi_s$ be any form.
By construction, we have that~$\phi = \Dfrak^{\omega}\psi_t$ for some $\omega\in \cubem$ and some~$0\leq t < s$
(where $\psi_t\in \Phi_t$ is the heavy form chosen by the algorithm at step~$t$).
Denote the copy of~$I_d = [d+1]$ used at step~$t$ by
$\{(t+1, 1), \dots, (t+1, d+1)\}$,
and write
$$\supp(\psi_t)\cap V_0 = \big\{(0, j_1), (0, j_2), \dots, (0, j_{w_0(\psi_t)})\big\}.$$
It is then easy to check that $\Dfrak^{\omega}\psi_t$ is the only form in $\Phi_s \cup \{\Sigma_{V_0}\}$ whose support contains
$$\big\{ (0, j_i):\: 1\leq i\leq d+1,\, \omega_i = 0\big\} \cup \big\{ (t+1, i):\: 1\leq i\leq d+1,\, \omega_i = 1\big\}.$$
Finally, $\Sigma_{V_0}$ is the only form in $\Phi_s \cup \{\Sigma_{V_0}\}$ which uses all of the variables $(0, 1), \dots$, $(0, d+1)$, and so this system is in $d$-normal form.

\medskip
\emph{Item 4.}
Choose optimal functions $u_{\phi}: G \rightarrow [-1,1]$, $\phi \in \Phi_s$, such that
$$\|f\|_{\square(\Phi_s)} = \Exp_{\xbf \in G^{V_s}} \Bigg[ f\big( \Sigma_{V_0}(\xbf) \big) \prod_{\phi \in \Phi_s} u_{\phi}\big(\phi(\xbf)\big) \Bigg].$$
We shift our focus to the function $u_{\psi}$,
where $\psi = \psi_s \in \Phi_s$ is the linear form of maximal weight chosen by the algorithm at step~$s$.
We can rewrite the expectation above as $\Exp_{\xbf \in G^{V_s}} \big[u_{\psi}(\psi(\xbf)) g(\psi(\xbf))\big]$, where
\begin{equation*} %\label{eq:defg}
    g(z) := \Exp_{\ybf \in G^{V_s}:\, \psi(\ybf)=z}\Bigg[ f\big( \Sigma_{V_0}(\ybf) \big) \prod_{\phi \in \Phi_s \setminus \{\psi\}} u_{\phi}\big(\phi(\ybf)\big) \Bigg].
\end{equation*}
Note that $\|g\|_{\infty} \leq 1$, as all functions appearing in its definition are $1$-bounded.

Since $\|u_{\psi}\|_{\infty} \leq 1$, by Cauchy-Schwarz we have
\begin{align*}
    \|f\|_{\square(\Phi_s)}
    &= \Exp_{\xbf \in G^{V_s}} \big[u_{\psi}(\psi(\xbf)) g(\psi(\xbf))\big] \\
    &\leq \Exp_{\xbf \in G^{V_s}} \big[g(\psi(\xbf))^2\big]^{1/2} \\
    &= \Exp_{\xbf \in G^{V_s}} \Bigg[ g\big(\psi(\xbf)\big) f\big(\Sigma_{V_0}(\xbf)\big) \prod_{\phi \in \Phi_s \setminus \{\psi\}} u_{\phi}\big(\phi(\xbf)\big) \Bigg]^{1/2}.
\end{align*}
Now we use the fact (from \emph{Item~3.}) that $\Phi_s \cup \{\Sigma_{V_0}\}$ is in $d$-normal form.
As all terms inside the expectation above are $1$-bounded, by Lemma~\ref{lem:snormal} we conclude that this last expression is at most $\|g\|_{U^{d+1}}^{1/2}$, and thus
\begin{equation} \label{eq:fandg}
    \|f\|_{\square(\Phi_s)} \leq \|g\|_{U^{d+1}}^{1/2}.
\end{equation}

Next we bound~$\|g\|_{U^{d+1}}$.
Note that
\begin{align*}
    \|g\|_{U^{d+1}}^{2^{d+1}}
    &= \Exp_{\xbf \in G^{V_s}} \big[g(\psi(\xbf)) \Dcal_{d+1}g(\psi(\xbf))\big] \\
    &= \Exp_{\xbf \in G^{V_s}} \Bigg[ f\big( \Sigma_{V_0}(\xbf) \big) \prod_{\phi \in \Phi_s \setminus \{\psi\}} u_{\phi}\big(\phi(\xbf)\big) \cdot \Dcal_{d+1}g(\psi(\xbf)) \Bigg].
\end{align*}
By construction of the linear forms $\Dfrak^{\omega}\psi$, for each fixed $\xbf_{V_s} \in G^{V_s}$ we have
$$\Dcal_{d+1}g(\psi(\xbf_{V_s})) = \Exp_{\xbf_{V_{s+1} \setminus V_s} \in G^{V_{s+1} \setminus V_s}} \prod_{\omega \in \cubem} g\big(\Dfrak^{\omega}\psi(\xbf_{V_{s+1}})\big);$$
it follows that
\begin{equation*}
    \|g\|_{U^{d+1}}^{2^{d+1}} = \Exp_{\xbf \in G^{V_{s+1}}} \Bigg[ f\big( \Sigma_{V_0}(\xbf) \big) \prod_{\phi \in \Phi_s \setminus \{\psi\}} u_{\phi}\big(\phi(\xbf)\big) \prod_{\omega \in \cubem} g\big(\Dfrak^{\omega}\psi(\xbf)\big) \Bigg].
\end{equation*}
Since $\Phi_{s+1} = \big(\Phi_s \setminus \{\psi\}\big) \cup \big\{ \Dfrak^{\omega}\psi:\, \omega \in \cubem \big\}$,
this last expression is at most~$\|f\|_{\square(\Phi_{s+1})}$.
We then conclude by inequality~\eqref{eq:fandg}.

\medskip
\emph{Item 5.}
Denote by~$\sf(n)$ the final value of~$\sf$ in the algorithm \texttt{SystemCut$(n,d)$}
(that is, when~$|V_0| = n$).
We will show by induction that $\sf(n) < (2d+2)^n$ for every $n\geq 1$.

First we note that~$\sf(n)$ equals the number of times we enter the loop in the algorithm;
in particular $\sf(n)=0$ if $n \leq d$, as in this case $w_0(\Sigma_{V_0}) = n\leq d$ and we do not enter the loop.
This takes care of the base case for the induction.

After we enter the loop for the first time, we replace the form~$\Sigma_{V_0}$ of weight~$n$ by the forms $\Dfrak^{\omega}\Sigma_{V_0}$, $\omega\in \cubem$, which have weight
$$w_0(\Dfrak^{\omega}\Sigma_{V_0}) = (d+1 - |\omega|) + (n-d-1) = n - |\omega|.$$
%and each such form behaves in precisely the same way as
It follows that
$$\sf(n) = 1 + \sum_{\omega \in \cubem} \sf(n - |\omega|) = 1 + \sum_{i=1}^{d+1} \binom{d+1}{i} \sf(n-i).$$
By the induction hypothesis we have:
\begin{align*}
    \sum_{i=1}^{d+1} \binom{d+1}{i} \sf(n-i)
    &< \sum_{i=1}^{d+1} \binom{d+1}{i} (2d+2)^{n-i} \\
    &= (2d+2)^n \sum_{i=1}^{d+1} \binom{d+1}{i} \Big(\frac{1}{2d+2}\Big)^i \\
    &= (2d+2)^n \bigg(\Big(1 + \frac{1}{2d+2}\Big)^{d+1} - 1\bigg).
\end{align*}
Using that $1+x \leq e^x$ for all $x\geq 0$, we conclude that
$$\sf(n) < 1 + (2d+2)^n \big(e^{1/2} - 1\big) < (2d+2)^n,$$
which finishes the proof by induction.
\end{proof}

%One can see (the contrapositive of) item $(ii)$ as a sort of \emph{very weak} inverse theorem for the Gowers norms.

\section{The Equivalence Theorem for Cayley hypergraphs}
\label{sec:set_hypergraph}

In the previous section we showed that there is a close relationship between the notions of uniformity for additive sets and quasirandomness for hypergraphs.
In particular, if a set $A\subseteq G$ is uniform of degree~$d$, then its Cayley hypergraphs $\Gamma^{(k)}_A$ will all be quasirandom of order~$d$.
It then follows from Towsner's Theorem (Theorem~\ref{Towsnerthm}) that one can count all $d$-linear subgraphs inside Cayley hypergraphs of sets that are uniform of degree~$d$.

Interestingly, the extra symmetries satisfied by Cayley hypergraphs imply that a much \emph{stronger} result is true.
In order to show this we need to define another family of hypergraphs:

\begin{definition}[$s$-simple hypergraphs]
Given $s \geq 1$, we say that a hypergraph $F$ is \emph{$s$-simple} if the following is true:
for every edge $e \in E(F)$, there exists a set of $s$ vertices $\{v_1, \dots, v_{s}\} \subseteq e$ which is not contained in any other edge of $F$
(i.e. $\{v_1, \dots, v_{s}\} \nsubseteq e'$ for all $e' \in E(F) \setminus \{e\}$).
We denote the set of all $s$-simple $k$-graphs by $\mathcal{S}^{(k)}_{s}$.
\end{definition}

It is immediate from the definition that all $d$-linear hypergraphs are $(d+1)$-simple, but as the next example shows the converse is false.

\begin{ex}
Let $F$ be the connected $k$-graph on $2k-d$ vertices and two edges.
Then $F$ is only $d$-linear, but it is $1$-simple.
\end{ex}

This very easy example shows that the difference between `how linear' and `how simple' a hypergraph can be is unbounded.
It also shows that one cannot hope to control the count of all $s$-simple subgraphs by using only quasirandomness of order~$s$ (say):
the Paley graph $\Pcal^{(k)}_{d}$ from Section~\ref{sec:examples} is quasirandom of order~$d-1$, but contains twice as many copies of the 1-simple $k$-graph~$F$ given above than would be expected.

%As another example, recall the definition of the squashed octahedron.

By contrast, it was implicitly shown by Gowers and Wolf \cite[Section~4]{TrueComplexity} that Cayley hypergraphs of sets which are uniform of degree~$d$ have about the expected count of all $(d+1)$-simple hypergraphs as subgraphs.
A simple proof of this fact will be given below, when we prove the Equivalence Theorem (Theorem~\ref{thm:Cayley_equiv}).

\smallskip

In order to state the main result of this section, we must first define a family of linear systems $(\Phi_{k, d})_{k\geq d+1}$ which are \emph{complete} for uniformity of degree~$d$:

%In the other direction, one can also turn the arrows around and transfer hypergraphs into the additive setting, obtaining linear systems.
%The main importance of the uniformity norms in additive combinatorics is that it controls the statistics of linear configurations (such as arithmetic progressions and parallelepipeds) inside additive sets.
%We say that a linear system $\Phi = \{\phi_1, \dots, \phi_m\}: G^r \rightarrow G^m$ is \emph{complete} for the $U^d(G)$ norm if, whenever $A \subseteq G$ has density $\delta$ and satisfies
%$$\mathbb{E}_{\xbf \in G^r} \Bigg[ \prod_{i=1}^m A \big(\phi_i(\xbf)\big) \Bigg] = \delta^m \pm \varepsilon,$$
%then $A$ is $\varepsilon'$-uniform of degree $d-1$.
%In other words, if containing the `correct' number of linear configurations of this type forces $A$ to be uniform.

\begin{definition}[The system $\Phi_{k, d}$]
    Let $\Phi_{k, d}$ be the linear system on $k 2^{\binom{k-1}{d}}$ variables and $2^{\binom{k}{d}}$ linear forms which is defined as follows.
    The variables are indexed by maps belonging to the set
    $$\Ical_{k,d} = \bigcup_{j=1}^k \bigg\{\tau: \binom{[k] \setminus \{j\}}{d} \rightarrow \{0, 1\} \bigg\}$$
    and, for each $\sigma \in \{0, 1\}^{\binom{[k]}{d}}$, there is a linear form $\phi_{\sigma}\in \Phi_{k, d}$ given by
    $$\phi_{\sigma}\big( (x_{\tau})_{\tau \in \Ical_{k,d}} \big) = \sum_{j=1}^k x_{\sigma_{-j}},$$
    where $\sigma_{-j}$ denotes the restriction of $\sigma$ to $\binom{[k] \setminus \{j\}}{d}$.
\end{definition}

In other words, we have variables indexed by partial maps from $\binom{[k]}{d}$ to $\{0, 1\}$ and forms indexed by (total) maps $\sigma: \binom{[k]}{d} \rightarrow \{0, 1\}$, with each form $\phi_{\sigma}$ being the sum of all variables indexed by those partial maps compatible with $\sigma$.
Note that the linear system $\Phi_{k, d}$ is the arithmetic analogue of the $d$-linear $k$-graph~$\M^{(k)}_d$ which is complete for quasirandomness of order~$d$:
a collection of elements $(x_1, \dots, x_K)\in G^K$ for $K = v(\M^{(k)}_d) = k 2^{\binom{k-1}{d}}$ forms a copy of~$\M^{(k)}_d$ in~$\Gamma^{(k)}_A$ iff $\phi(x_1, \dots, x_K)\in A$ for all $\phi\in \Phi_{k, d}$.
This can be easily shown from the construction of~$\M^{(k)}_d$ given in Section~\ref{sec:hyp_quasi}.

We are now ready to formally state and prove our Equivalence Theorem for Cayley hypergraphs, which was informally stated as Theorem~\ref{thm:equiv_informal} in the Introduction.

%Before presenting the proof of the Equivalence Theorem, it is instructive to make a few remarks regarding each item in the statement.
%Item $(i)$ is just the usual way in which we measure quasirandomness of a given degree in an additive set $A$, and it is the only item which does not depend on the integer parameter $k$.
%Item $(ii)$ is a property that regards an additive set directly and does not make explicit use of hypergraphs; its...
%The equivalence between $(i)$ and $(ii)$ can be seen as the assertion that $\Phi_{k, d}$ is \emph{forcing} for uniformity of degree $d$.

\begin{thm}[Equivalence Theorem for quasirandom Cayley hypergraphs] \label{thm:Cayley_equiv}
%Let $d \geq 1$ be an integer, and let $A \subseteq G$ be a set of density $\delta$ in $G$.
%Then for every fixed $k \geq d+1$ the following statements are polynomially equivalent:
Let $k, d \geq 1$ be integers with $k\geq d+1$.
Then the following statements are polynomially equivalent, where $G$ is a finite additive group, $A \subseteq G$ is a subset and $\delta = |A|/|G|$:
\begin{itemize}
    \item[$(i)$] $A$ is uniform of degree $d$:\quad
    $\| A - \delta \|_{U^{d+1}} \leq c_1$.
    \item[$(ii)$] $A$ has few patterns of type $\Phi_{k, d}$:
    $$\mathbb{P}_{\xbf \in G^{\Ical_{k,d}}}\big( \Phi_{k, d}(\xbf) \subseteq A \big) \leq \delta^{2^{\binom{k}{d}}} + c_2.$$
    \item[$(iii)$] $\Gamma^{(k)}_A$ is quasirandom of order $d$:\quad
    $\disc_d(\Gamma^{(k)}_A) \leq c_3$.
    \item[$(iv)$] $\Gamma^{(k)}_A$ correctly counts all $d$-linear hypergraphs:
    $$t(F,\, \Gamma^{(k)}_A) = \delta^{e(F)} \pm e(F) c_4 \quad \text{for all } F \in \mathcal{L}^{(k)}_{d}.$$
    \item[$(v)$] $\Gamma^{(k)}_A$ correctly counts all $(d+1)$-simple hypergraphs:
    $$t(F,\, \Gamma^{(k)}_A) = \delta^{e(F)} \pm e(F) c_5 \quad \text{for all } F \in \mathcal{S}^{(k)}_{d+1}.$$
    \item[$(vi)$] $\Gamma^{(k)}_A$ has few squashed octahedra $\oct^{(k)}_{d+1}$:
    $$t\big( \oct^{(k)}_{d+1},\, \Gamma^{(k)}_A \big) \leq \delta^{2^{d+1}} + c_6.$$
    \item[$(vii)$] $\Gamma^{(k)}_A$ has small $(d+1)$-deviation:
    \quad $\dev_{d+1}\big(\Gamma^{(k)}_A\big) \leq c_7$.
\end{itemize}
\end{thm}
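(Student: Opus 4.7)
The plan is to anchor the cycle on Theorem~\ref{thm:main} and Towsner's Theorem~\ref{Towsnerthm}, supplemented by one new arithmetic--to--hypergraph step $(i)\to(v)$ and two clean Cayley-specific identities that collapse the remaining conditions to uniformity norms. First, for the cluster $(i)\Leftrightarrow(ii)\Leftrightarrow(iii)\Leftrightarrow(iv)$: the equivalence $(i)\Leftrightarrow(iii)$ is Theorem~\ref{thm:main}, $(iii)\Leftrightarrow(iv)$ is a direct quotation of Towsner's theorem, and $(ii)\Leftrightarrow(iii)$ follows from the same theorem once one unwinds the iterated doubling construction of $\M^{(k)}_d$ to see that an assignment $\xbf\in G^{\Ical_{k,d}}$ yields a copy of $\M^{(k)}_d$ in $\Gamma^{(k)}_A$ if and only if $\phi(\xbf)\in A$ for every $\phi\in\Phi_{k,d}$; hence $\mathbb{P}(\Phi_{k,d}(\xbf)\subseteq A)=t(\M^{(k)}_d,\Gamma^{(k)}_A)$.

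The heart of the argument is the implication $(i)\to(v)$. For a $(d+1)$-simple hypergraph $F$, I would split $A=\delta+(A-\delta)$ and expand
$$t\big(F,\Gamma^{(k)}_A\big) = \sum_{S\subseteq E(F)} \delta^{e(F)-|S|}\, \Exp_{\xbf\in G^{V(F)}} \prod_{e\in S}(A-\delta)\big(\Sigma(\xbf_e)\big).$$
For any nonempty $S$ and any $e\in S$, $(d+1)$-simplicity provides a subset $\sigma_e\subseteq e$ of size $d+1$ not contained in any other edge of $F$, hence not contained in any $e'\in S\setminus\{e\}$; this certifies that the linear system $\{\Sigma_e:\,e\in S\}$ is in $d$-normal form in the sense of Section~\ref{sec:Cayley}. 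Lemma~\ref{lem:snormal} then bounds each inner expectation by $\|A-\delta\|_{U^{d+1}}\leq c_1$, and summing over the $2^{e(F)}$ subsets gives $(v)$. The two trivial consequences $(v)\to(iv)$ and $(v)\to(vi)$ follow from the class inclusion $\mathcal{L}^{(k)}_d\subseteq\mathcal{S}^{(k)}_{d+1}$ (when any two edges share at most $d$ vertices, every $(d+1)$-subset of an edge is a signature) and from $\oct^{(k)}_{d+1}$ being $(d+1)$-simple (with signature $\{x_1^{(\omega_1)},\dots,x_{d+1}^{(\omega_{d+1})}\}$ for the edge indexed by $\omega$), respectively.

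The remaining slots $(vi)$ and $(vii)$ fold back into the cluster via the Cayley-specific identities obtained from the change of variables of Lemma~\ref{RelationUkOctk}, after integrating out the $k-d-1$ ``passive'' coordinates of the squashed octahedron (whose sum is uniform on $G$):
$$t\big(\oct^{(k)}_{d+1},\,\Gamma^{(k)}_A\big) = \|A\|_{U^{d+1}}^{2^{d+1}},\qquad \dev_{d+1}\big(\Gamma^{(k)}_A\big) = \|A-\delta\|_{U^{d+1}}^{2^{d+1}}.$$
The second identity makes $(i)\Leftrightarrow(vii)$ essentially tautological, while the first reduces $(vi)\to(i)$ to the arithmetic statement that the uniformity defect $\|A\|_{U^{d+1}}^{2^{d+1}}-\delta^{2^{d+1}}$ polynomially controls $\|A-\delta\|_{U^{d+1}}^{2^{d+1}}$. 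One could alternatively go through $(vii)\to(iii)$, which is the squashed-octahedron analogue of Lemma~\ref{lem:cut<oct}: the same Gowers--Cauchy--Schwarz argument gives $\|f\|_{\square^k_d}\leq\|f\|_{\oct^{(k)}_{d+1}}$, hence $\disc_d(H)\leq\dev_{d+1}(H)^{1/2^{d+1}}$.

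The main obstacle is closing the cycle through $(vi)$, since a triangle-inequality manipulation of the defect identity is not on its own sufficient when $\delta$ is bounded away from zero. Here the Cayley structure is essential: the identity $t(\oct^{(k)}_{d+1},\Gamma^{(k)}_A)=\|A\|_{U^{d+1}}^{2^{d+1}}$ converts a hypergraph hypothesis into an arithmetic one, and a careful expansion of $\|A\|_{U^{d+1}}^{2^{d+1}}$ via $A=\delta+(A-\delta)$ combined with Gowers--Cauchy--Schwarz bounds on the intermediate cross terms yields a polynomial inequality in $\|A-\delta\|_{U^{d+1}}$ that can then be solved to recover the desired bound. All other steps are either direct quotations (Theorems~\ref{thm:main} and~\ref{Towsnerthm}), trivial class inclusions between hypergraph families, or one well-organized application of the normal-form machinery developed in Section~\ref{sec:Cayley}.
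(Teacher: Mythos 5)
Your proposal follows the same overall cycle structure as the paper — anchoring the cluster $(i)$--$(iv)$ on Theorems~\ref{thm:main} and~\ref{Towsnerthm} and then closing the chain $(i)\to(v)\to(vi)\to(vii)\to(i)$ — and several pieces (the identification $\mathbb{P}(\Phi_{k,d}(\xbf)\subseteq A)=t(\M^{(k)}_d,\Gamma^{(k)}_A)$, the observation that $(d+1)$-simplicity certifies $d$-normal form, the Cayley identities reducing $\oct^{(k)}_{d+1}$ to $\oct^{(d+1)}$, and the tautological step $(vii)\to(i)$) match the paper exactly. However, there are two places where your argument as written does not land.

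First, for $(i)\to(v)$ you use the binomial expansion $A=\delta+(A-\delta)$ over all $S\subseteq E(F)$, which produces $2^{e(F)}-1$ nonempty terms, each bounded by $\|A-\delta\|_{U^{d+1}}$. This gives $|t(F,\Gamma^{(k)}_A)-\delta^{e(F)}|\leq\big((1+\delta)^{e(F)}-\delta^{e(F)}\big)\|A-\delta\|_{U^{d+1}}$, which is exponential in $e(F)$. Statement $(v)$ requires the error to be at most $e(F)c_5$ \emph{uniformly} in $F$, and no choice of $K$ in the polynomial-equivalence definition absorbs a factor growing like $2^{e(F)}/e(F)$. The paper avoids this by a telescoping decomposition, which has only $e(F)$ summands, each bounded using translation invariance of the cut norm via the same $(d+1)$-simplicity fingerprint; this achieves the stated linear dependence on $e(F)$.

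Second, and more seriously, your route out of $(vi)$ has a real gap. You correctly obtain $t(\oct^{(k)}_{d+1},\Gamma^{(k)}_A)=\|A\|_{U^{d+1}}^{2^{d+1}}$, so $(vi)$ becomes $\|A\|_{U^{d+1}}^{2^{d+1}}\leq\delta^{2^{d+1}}+c_6$, and you then want to deduce that $\|A-\delta\|_{U^{d+1}}$ is small by expanding $A=\delta+(A-\delta)$ and controlling the cross terms by Gowers--Cauchy--Schwarz. But Gowers--Cauchy--Schwarz only gives \emph{upper} bounds on $|\langle(g_\omega)\rangle_{\oct^{d+1}}|$, and the intermediate terms in the expansion of $\langle A,\dots,A\rangle_{\oct^{d+1}}$ can a priori carry either sign, so this does not produce a lower bound on $\|A\|_{U^{d+1}}^{2^{d+1}}-\delta^{2^{d+1}}$ in terms of $\|A-\delta\|_{U^{d+1}}^{2^{d+1}}$. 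Making this work requires a genuinely nontrivial fact: that near-equality in $t(\oct^{(d+1)},H)\geq\delta(H)^{2^{d+1}}$ forces small deviation. This is precisely the hard direction $(iii)\implies(iv)$ of Theorem~\ref{Towsnerthm} applied to the $(d+1)$-graph $\Gamma^{(d+1)}_A$ with $\M^{(d+1)}_d=\oct^{(d+1)}$, which is how the paper handles $(vi)\to(vii)$. You acknowledge the obstacle but the proposed ``careful expansion + Gowers--Cauchy--Schwarz + solve a polynomial inequality'' does not supply the missing ingredient; one should either cite the hypergraph Equivalence Theorem as the paper does, or prove the needed arithmetic completeness of $\oct^{(d+1)}$ from scratch (which amounts to reproving that part of Towsner's theorem).
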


\begin{proof}
Several of the implications were already shown before.
Indeed, Theorem~\ref{thm:main} is a quantitative form of the equivalence $(i) \iff (iii)$, and Theorem~\ref{Towsnerthm} shows that $(ii)$, $(iii)$ and $(iv)$ are polynomially equivalent
(recall that $\Phi_{k,d}$ is precisely the additive counterpart to the hypergraph $\textsc{M}^{(k)}_d$).
We will next prove the implications
$$(i) \implies (v) \implies (vi) \implies (vii) \implies (i),$$
which will finish the proof of the theorem.

\medskip
$(i) \implies (v)$:
Suppose $\| A - \delta \|_{U^{d+1}} \leq c_1$, and let $F$ be a $(d+1)$-simple hypergraph.
Write $V = V(F)$, $E(F) = \{e_1, \dots, e_{e(F)}\}$, and for each $1 \leq i \leq e(F)$ let $f_i \subseteq e_i$ be a set of $d+1$ elements which is not contained in any other edge $e_j$.
By the usual telescoping sum argument we have
\begin{align*}
    &\big| t(F, \Gamma^{(k)}_A) - \delta^{e(F)} \big| \\
    &\quad\leq \sum_{i = 1}^{e(F)} \Bigg|\Exp_{\xbf_V \in G^{V}} \Bigg[ \big(A \circ \Sigma(\xbf_{e_i}) - \delta\big) \prod_{j = i+1}^{e(F)}{A \circ \Sigma(\xbf_{e_j})} \Bigg]\Bigg| \\
    &\quad\leq \sum_{i = 1}^{e(F)} \Exp_{\xbf_{V \setminus f_i}} \Bigg|\Exp_{\xbf_{f_i}} \Bigg[ \big(A \big( \Sigma(\xbf_{f_i}) + \Sigma(\xbf_{e_i \setminus f_i}) \big) - \delta\big)
    \prod_{j = i+1}^{e(F)} A \big( \Sigma(\xbf_{e_j \cap f_i}) + \Sigma(\xbf_{e_j \setminus f_i}) \big) \Bigg]\Bigg|.
\end{align*}

Consider the $i$-th term in the last sum.
For a fixed $\xbf_{V \setminus f_i} \in G^{V \setminus f_i}$ and each $i+1 \leq j \leq e(F)$, define on $G^{e_j \cap f_i}$ the function $u_j = u_{j, \xbf_{V \setminus f_i}} := \T^{\Sigma(\xbf_{e_j \setminus f_i})}A \circ \Sigma$;
by assumption $|e_j \cap f_i| \leq d$, while $|f_i| = d+1$.
The $i$-th term in the sum can then be written as
\begin{align*}
    \Exp_{\xbf_{V \setminus f_i}} \Bigg|\Exp_{\xbf_{f_i}} \Bigg[ \big(\T^{\Sigma(\xbf_{e_i \setminus f_i})}A \circ \Sigma(\xbf_{f_i}) - \delta\big)
    &\prod_{j = i+1}^{e(F)} u_j(\xbf_{e_j \cap f_i}) \Bigg]\Bigg| \\
    &\leq \Exp_{\xbf_{V \setminus f_i}} \big\| \T^{\Sigma(\xbf_{e_i \setminus f_i})}A \circ \Sigma - \delta \big\|_{\square^{d+1}_{d}} \\
    &= \| A \circ \Sigma - \delta \|_{\square^{d+1}_{d}},
\end{align*}
where we used translation invariance in the last equality.
It then follows that the whole sum is bounded by $e(F) \cdot \| A \circ \Sigma - \delta \|_{\square^{d+1}_{d}}$.
Item $(v)$ now follows from the fact that the octahedral norm is stronger than the cut norm (Lemma~\ref{lem:cut<oct}), since
$$\| A \circ \Sigma - \delta \|_{\square^{d+1}_{d}} = \big\| \Gamma^{(d+1)}_A - \delta \big\|_{\square^{d+1}_{d}} \leq \big\| \Gamma^{(d+1)}_A - \delta \big\|_{\oct^{d+1}} = \| A - \delta \|_{U^{d+1}} \leq c_1,$$
and we may take $c_5 = c_1$.

\medskip
$(v) \implies (vi)$:
This is a special case, and we may take $c_6 = 2^{d+1} c_5$.

\medskip
$(vi) \implies (vii)$:
First we note that $t(\oct^{(k)}_{d+1},\, \Gamma^{(k)}_f) = t(\oct^{(d+1)},\, \Gamma^{(d+1)}_f)$ holds for all functions $f: G \rightarrow \R$.
Indeed, we have that
\begin{align*}
    t\big(\oct^{(k)}_{d+1},\, \Gamma^{(k)}_f\big) &= \Exp_{\ybf \in G^{k-d-1}} \Exp_{\xbf^{(0)}, \xbf^{(1)} \in G^{d+1}} \prod_{\omega \in \{0, 1\}^{d+1}} f \big( \Sigma(\ybf) + \Sigma(\xbf^{(\omega)}) \big) \\
    &= \Exp_{\ybf \in G^{k-d-1}} t\big( \oct^{(d+1)},\, \Gamma^{(d+1)}_{\T^{\Sigma(\ybf)}f}\big) \\
    &= t\big(\oct^{(d+1)},\, \Gamma^{(d+1)}_f\big).
\end{align*}
Item $(vi)$ is then the same as requiring that $t\big(\oct^{(d+1)},\, \Gamma^{(d+1)}_A\big) \leq \delta^{2^{d+1}} + c_6$.
By the Equivalence Theorem for hypergraph quasirandomness and its proof, we conclude that $t\big(\oct^{(d+1)},\, \Gamma^{(d+1)}_A - \delta\big) \leq 2^{2^{d+2}} c_6^{1/2^{d+1}}$;
using the identity above for $f = A - \delta$, this is the same as saying that $t\big(\oct^{(k)}_{d+1},\, \Gamma^{(k)}_A - \delta\big) \leq 2^{2^{d+2}} c_6^{1/2^{d+1}}$, which is exactly item $(vii)$ with $c_7 = 2^{2^{d+2}} c_6^{1/2^{d+1}}$.

\medskip
$(vii) \implies (i)$:
As discussed in the previous equivalence, item $(vii)$ is the same as requiring that $t\big(\oct^{(d+1)},\, \Gamma^{(d+1)}_A - \delta\big) \leq c_3$.
Since
$$t\big(\oct^{(d+1)},\, \Gamma^{(d+1)}_A - \delta\big) = \big\| \Gamma^{(d+1)}_A - \delta \big\|_{\oct^{d+1}}^{2^{d+1}} = \|A - \delta\|_{U^{d+1}}^{2^{d+1}},$$
we obtain item $(i)$ with $c_1 = c_3^{1/2^{d+1}}$.
\end{proof}

%Finally, let us remark on the differences between this result and Towsner's Equivalence Theorem for quasirandom hypergraphs.
%Give example of $\Pcal^{(k)}_d$, that it does not count $(d+1)$-simple $k$-graphs.
%\input{extensions}
%\input{Towsner}

\section*{Acknowledgements}

The work presented here was done while the author was at the University of Cologne.
It was supported by the European Union’s EU Framework Programme for Research and Innovation Horizon 2020 under the Marie Skłodowska-Curie Actions Grant Agreement No 764759 (MINOA), and by the Dutch Research Council (NWO) as part of the NETWORKS programme (grant no. 024.002.003).

\bibliographystyle{siam}
\bibliography{uniform}

\end{document}